\theoremstyle{definition}
\newtheorem{definition}{Definition}[section]
\newtheorem{example}{Example}[definition]
\newtheorem{theorem}{Theorem}[section]
\newtheorem{prop}[theorem]{Proposition}
\newtheorem{lemma}[theorem]{Lemma}
\begin{document}

\title{Kac-Moody Quaternion Lie Algebra}

\author{
\name{Ferdi, Amir Kamal Amir, Andi Muhammad Anwar}
\address{Department of Mathematics\\ Faculty of Mathematics and Natural Sciences\\
Hasanuddin University,\\ Makassar 90245, Indonesia}}

\abstract{\quad This research aims to define Kac-Moody Lie algebra in Quaternion by using the concept of Quaternification of Lie algebra. The results of this research obtained the definition of Universal Kac-Moody Quaternion Lie algebra, Standard Kac-Moody Quaternion Lie algebra, and Reduced Kac-Moody Quaternion Lie algebra}

\keywords{Universal Kac-Moody Quaternion Lie Algebra, Standard Kac-Moody Quaternion Lie Algebra, Reduced Kac-Moody Quaternion Lie Algebra}
\date{}
\maketitle

\renewcommand\qedsymbol{$\blacksquare$}
\section{Introduction}
\label{sec1}
Algebra is a branch of mathematics that studies structures, relations, and mathematical operations involving abstract objects such as numbers, variables, and arithmetic operations. One of the topics in algebra that is widely researched is Lie algebra. The term Lie algebra is taken from the name of the Norwegian mathematician Marius Sophus Lie (1842–1899). Shopus Lie developed Lie algebra to study the concept of infinitesimal transformations in the 1870s. Several researchers have published their writings in the form of books and papers on Lie algebra, such as Gerard G.A. Bauerle and Eddy A. De Kerf \cite{ref2}, Brian C. Hall \cite{ref7}, dan James E. Humpherys \cite{ref8}. In their books, they have discussed some material about Lie algebra, such as basic concepts about Lie algebra, homomorphisms of Lie algebra, complexification of real Lie algebra, simple Lie algebra, semisimple Lie algebra, and others. Furthermore, in 1984, Rolf Farnsteiner \cite{ref5} began to introduce Lie algebra in quaternion by investigating the Lie algebra that is isomorphic to the central quotient of the division algebra of quaternion, which is then referred to as Lie algebra quaternion. Then, Dominic Joyce \cite{ref9} gave the definition of Lie algebra Quaternion as an AH module object that satisfies the bracket condition of Lie algebra. The AH module is a more specialized concept than the H module. Furthermore, Tasioki Kori \cite{ref13} published papers that discuss Quaternification on complex Lie algebra, which is an extension of complexification on real Lie algebra. In the paper, Tasioki Kori introduced the definitions of Lie algebra quaternion, quaternification of Lie algebra, quaternification on simple Lie algebra, and quaternification on complex Lie algebra. root space decomposition of a Quaternion Lie algebra. One of the important topics in the development of Lie algebra is Kac-Moody Lie algebra.Kac-Moody Lie algebra is a Lie algebra that uses Cartan matrix generalization. Kac-Moody Lie algebra was proposed with two different properties by Victor Gershevich Kac \cite{ref10} and Robert Vaughan Moody \cite{ref17}. Since there are two definitions with different properties, Steven Berman \cite{ref3} gave names for each property, namely Standard Kac-Moody Lie Algebra and Reduced Kac-Moody Lie Algebra. However, to get the two definitions, first define the Universal Kac-Moody Lie Algebra, which is then applied with two properties so as to obtain Standard Kac-Moody Lie Algebra and Reduced Kac-Moody Lie Algebra.Based on the description above, the interesting thing to study is the combination of quaternification on complex Lie algebra with Lie Kac-Moody algebra. Therefore, this study discusses the construction of Universal, Standard, and Reduced quaternion Kac-Moody Lie algebras using quaternification.

\section{Construction of Universal Kac-Moody Quaternion Lie Algebra}
\label{sec2}

\subsection{General Structure Over Quatenion}
\label{sec2.1}

This section will discuss the quaternion tensor algebra \(T(V)\) constructed from the quaternion module \((\mathcal{V},J)\), the universal quaternion enveloping algebra \(U(L)\) of a quaternion Lie algebra, and finally the quaternion algebra \(L(X,JX)\) generated by the basis \(\{X,JX\}\). The discussion will begin with the definition of the quaternion tensor algebra \(T(V)\). 

Let \((\mathcal{V},J)\) be a module over \(\mathbb{H}\). Let \(\sigma\) be a linear involution over \(\mathbb{C}\) on \(\mathcal{V}\) that is anti-commutative with \(J:J\sigma=-\sigma J\), and let \(\tau\) be a complex conjugation linear involution on \(\mathcal{V}\) that commutes with \(\sigma\tau=\tau\sigma\). Let \(\mathcal{V}_0\) be the eigenspace of \(\sigma\) corresponding to the eigenvalue \(+1\). Then, \(\mathcal{V}=\mathcal{V}_0+J\mathcal{V}_0\). Both \(\mathcal{V}_0\) and \(J\mathcal{V}_0\) are invariant under \(\tau\).
\begin{definition}
    Let \(T(V)\) be a \(\mathbb{R}\)-submodule of the \(\mathbb{H}\)-module \((\mathcal{V},J)\). \(T(V)\) is called \textbf{quaternion tensor algebra} if \(T(V)\) that satisfies the following property:
    \begin{enumerate}
        \item \(T(V)\) is a real tensor algebra
        \item \(\sigma\) and \(\tau\) are homomorphism of tensor algebra \(T(V)\)
        \begin{enumerate}
            \item \(T(V)\) is invariant under the involutions \(\sigma\) and \(\tau\).
            \item\begin{multicols}{2}
            \(\sigma(t_1+t_2)=\sigma(t_1)+\sigma(t_2)\)\\\(\sigma(t_1\cdot t_2)=\sigma(t_1)\cdot\sigma(t_2)\)\\\(\sigma(a\cdot t_1)=a\cdot\sigma(t_1)\)\\\(\sigma(1)=1\)\\for all \(a\in \mathbb{R}\) and \(t_1,t_2\in T(V)\)
            \columnbreak
            
            \(\tau(t_1+t_2)=\tau(t_1)+\tau(t_2)\)\\\(\tau(t_1\cdot t_2)=\tau(t_1)\cdot\tau(t_2)\)\\\(\tau(a\cdot t_1)=a\cdot\tau(t_1)\)\\\(\tau(1)=1\)
            \end{multicols}
        \end{enumerate}
    \end{enumerate}
\end{definition}
It can be seen that multiplication in \(T(V)\) is associative. Furthermore, \(1\in \mathbb{H}\) is the identity element of \(T(V)\). This indicates that \(T(V)\) is an associative quaternion algebra with an identity element. A tensor quaternion algebra can also be referred to as a tensor algebra generated by the quaternion module \((\mathcal{V},J)\).

For the quaternion tensor algebra \(T(V)\), the subspaces \((T(V))^+\) and \((T(V))^-\) are denoted as the eigenspaces of the involution \(\sigma\) with eigenvalues \(+1\) and \(-1\), respectively. \((T(V))^+\) is a vector subspace of \(T(V)\) that is invariant under complex conjugation \(\tau\), and
    \begin{equation}\label{}
        T(V)=(T(V))^++(T(V))^-, (T(V))^+=T(V)\cap \mathcal{V}_0, (T(V))^-=T(V)\cap J\mathcal{V}_0
    \end{equation}    
where \((T(V))^+\) is a subalgebra of \(T(V)\). 

Next, the quaternification of the tensor algebra will be defined as follows:
\begin{definition}
    Let \(T_0(V)\) be a real or complex tensor algebra. Let \(T(V)\) be a quaternion tensor algebra. \(T(V)\) is said to be the quaternification of \(T_0(V)\) if \(T_0(V)\) is a real tensor subalgebra of \((T(V))^+\), and if there exists a real vector subspace \(b(V)\) of \((T(V))^-\) such that \(T_0(V)+b(V)\) generates \(T(V)\) as a real tensor algebra.
\end{definition}
Next, we will define the associative quaternion algebra.

Let \((\mathcal{V'},J)\) be a module over \(\mathbb{H}\). Let \(\sigma\) be a linear involution over \(\mathbb{C}\) on \(\mathcal{V'}\) that is anti-commutative with \(J:J\sigma=-\sigma J\), and let \(\tau\) be a complex conjugation linear involution on \(\mathcal{V'}\) that commutes with \(\sigma\tau=\tau\sigma\). Let \(\mathcal{V'}_0\) be the eigenspace of \(\sigma\) corresponding to the eigenvalue \(+1\). Then, \(\mathcal{V'}=\mathcal{V'}_0+J\mathcal{V'}_0\). Both \(\mathcal{V'}_0\) and \(J\mathcal{V'}_0\) are invariant under \(\tau\).
\begin{definition}
        Let \(A\) be a \(\mathbb{R}\)-submodule of the \(\mathbb{H}\)-module \((\mathcal{V'},J)\). \(A\) is called \textbf{quaternion algebra} if \(A\) that satisfies the following property:
    \begin{enumerate}
        \item \(A\) is a real algebra
        \item \(\sigma\) and \(\tau\) are homomorphism of algebra \(A\)
        \begin{enumerate}
            \item \(A\) is invariant under the involutions \(\sigma\) and \(\tau\).
            \item\begin{multicols}{2}
            \(\sigma(u+v)=\sigma(u)+\sigma(v)\)\\\(\sigma(uv)=\sigma(u)\cdot\sigma(v)\)\\\(\sigma(au)=a\sigma(u)\)\\\(\sigma(1)=1\)\\for all \(a\in \mathbb{R}\) and \(u,v\in A\)
            \columnbreak
            
            \(\tau(u+v)=\tau(u)+\tau(v)\)\\\(\tau(uv)=\tau(u)\tau(v)\)\\\(\tau(a u)=a\tau(u)\)\\\(\tau(1)=1\)
            \end{multicols}
        \end{enumerate}
    \end{enumerate}
\end{definition}
An algebra \(A\) is called an associative quaternion algebra if for every \(u,v,w\in A\) the following holds:
\begin{equation}
    (uv)ww=u(vw)
\end{equation}
An associative quaternion algebra has an identity element \(e\) if \(eu=ue=u\) for every \(u \in A\).

An associative quaternion algebra can also be refreed to as an associative algebra generated by the quaternion module \((\mathcal{V'},J)\).

For an associative quaternion algebra \(A\), let \(A^pm\) denote the eigenspaces of the involution \(\sigma\) with eigenvalues \(\pm{1}\), respectively. \(A^pm\) are vector subspaces of \(A\) that are invariant under complex conjugation \(\tau\), and
\begin{equation}
    A=A^++A^-, A^+=A\cap \mathcal{V'}_0, A^-=A\cap J\mathcal{V'}_0.
\end{equation}
where \(A^+\) is a subalgebra of \(A\).

Next, the quaternification of an associative algebra will be defined as follows:
\begin{definition}
    Let \(A_0\) be a real or complex tensor algebra. Let \(A\) be a quaternion tensor algebra. \(A\) is said to be the quaternification of \(A_0\) if \(A_0\) is a real tensor subalgebra of \(A^+\), and if there exists a real vector subspace \(b\) of \(A^-\) such that \(A_0+b\) generates \(A\) as a real tensor algebra.
\end{definition}
\begin{definition}
    Let \(A_1\) and \(A_2\) be a quaternion associative algebra. A homomorphism \(\varphi:A_1\to A_2\) of real associative algebra is called a \textbf{homomorphism of quaternion associative algebra} if
    \begin{equation}\label{1}
        \varphi(\sigma u)=\sigma\varphi(u) \text{ and } \varphi(\tau u)=\tau\varphi(u), \text{ for all } u\in A_1
    \end{equation}
\end{definition}
\begin{definition}
    Let \(A\)  be a quaternion associative algebra and let \(I\) be an ideal of \(A\) viewed as a real associative algebra. \(I\) is called  \textbf{ideal of quaternion associative algebra} if \(I\) invariant over involution \(\sigma\).
\end{definition}
The quotient space of the associative quaternion algebra \(A\) by an ideal \(I\) is equipped with the structure of an associative quaternion algebra, where the involution \(\Hat{\sigma}\) on \(A/I\) is defined by
\begin{equation}
    \Hat{\sigma}(x+I)=\sigma x+I
\end{equation}
For a homomorphism of associative quaternion algebras \(\phi:A_1\to A_2\), the kernel \(\ker \phi\) is an ideal of \(A_1\).

From the associative quaternion algebra, we naturally obtain a quaternion Lie algebra \(A_L\) (Kori, 2023) by defining the Lie bracket as follows:
\begin{equation}
    [c_1\otimes u, c_2\otimes v]=(c_1c_2)\otimes(u\cdot v)-(c_2c_1)\otimes(v\cdot u)
\end{equation}
for every \(c_1,c_2\in \mathbb{H}\) and \(u,v\in A_0\), where \(A_0\) is quaternification of \(A\). The associativity of \(A\) leads to the Jacobi identity of this bracket.

Next, it will be shown that the quaternion tensor algebra \(T(V)\) is general. Suppose \(\phi:V\to U\) is a linear map from the quaternion module \((\mathcal{V},J)\) to associative quaternion algebra \(U\). Then there exist a unique homomorphism (of associative quaternion algebras) \(\psi:T(V)\to U\) such that \(\phi=\psi\circ\varphi\), where \(\varphi\) is the canonical embedding of \(V\) into \(T(V)\). This embedding is given by \(\varphi(t_1):t^1\) (where \(t^1\in T(V)\)), we have
\begin{equation}
    t=t^0+t^1+\dots+t^k+\dots, \text{  } (t^k\in T^kV)
\end{equation}
the definition of \(\psi\) is simply given by
\begin{equation}
    \psi\Bigg(\sum_{k=0}^{\infty} \sum_{i_1\dots i_k} a_{i_1\dots i_k}v_{i_1}\otimes\dots\otimes v_{i_k}\Bigg)=\sum_{k=0}^{\infty} \sum_{i_1\dots i_k} a_{i_1\dots i_k}\phi(v_{i_1})\otimes\dots\otimes \phi(v_{i_k})
\end{equation}
where the product on the right-hand side is the product in the associative quaternion algebra \(U\).

The next topic is the definition and construction of the general enveloping algebra \((U(L),i)\) of the quaternion Lie algebra \(L\). Subsequently, we will see how the ideas from the previously discussed associative quaternion tensor algebra are used in the construction of \(U(L)\).
\begin{definition}
    Let \(L\) be a quaternion Lie algebra. A \textbf{universal enveloping algebra} of \(L\) is a pair \((U(L),i)\) with \(U(L)\) a quaternion associative algebra with unit element and homomorphism \(i:L\to U(L)\) be a homomorphism quaternion Lie algebra where \(U(L)\) is considered as a quaternion Lie algebra (because for any quaternion associative algebra has a quaternion Lie algebra in a natural way), i.e. \(i\) is a linear map satisfying 
    \begin{equation}
        i([x,y])=i(x)i(y)-i(y)i(x)\text{ } (x,y\in L) 
    \end{equation}
    \begin{equation}
        i(\sigma x)=\sigma i(x) \text{ and } i(\tau x)=\tau i(x) \text{ } (x\in L)
    \end{equation}
    Futhermore, the pair \((U(L),i)\) is such that for any other pair \((W,j)\) with \(W\) an quaternion associative algebra with unit element and \(j\) a quaternion Lie algebra homomorphism \(j:L\to W\) there exist a unique homomorphism \(\psi_1:U(L)\to W\), mapping the identity of \(U(L)\) to identity of  \(W\), and such that \(j=\psi_1 \circ i\).
\end{definition}
The discussion begins with the quaternion Lie algebra \(L\). Given a quaternion Lie algebra 
\(L\), \(L\) is a submodule of \(\mathbb{R}\) within the quaternion module \((\mathcal{V},J)\). Consequently, one can construct the quaternion tensor algebra \(T(L)\) with an identity element,
\begin{equation}
    T(L)=\mathbb{H}\oplus L\oplus T^2L\oplus T^3L\oplus\dots\oplus T^kL\oplus\dots
\end{equation}
The next step is to construct the ideal \(I\) in \(T(L)\), using the Lie bracket in \(L\). Consider elements in \(L\oplus T^2L\subset T(L)\) of the form
\begin{equation}
    I_{x,y}=x\otimes y-y\otimes x-[x,y] \text{   } (x,y\in L)
\end{equation}
An ideal \(I\) in \(T(L)\) will be constructed form elements of this form, by enforcing the condition that elements of \(I\) are annihilated from both the left and right by element of \(T(L)\). Specifically, \(I\) is given by
\begin{equation}
    I:=\Big\{\sum_{x,y\in L} t\otimes I_{x,y}\otimes t'|x,y\in L;t,t'\in T(L)\Big\}
\end{equation}
with the two sided ideal \(I\), we can define the quaternion Lie algebra quotient
\begin{equation}
    U(L):=T(L)/I
\end{equation}
and the canonical projection
\begin{equation}
    \Psi:t\in T(L)\to \Psi(t)\in U(L)
\end{equation}
by definition, we have
\begin{equation}
    \Psi(I)=0\in U(L)
\end{equation}
In particular, since \(\Psi\) is a canonical homomorphism, it satisfies
\begin{equation}
    \Psi(x\otimes y-y\otimes x-[x,y])=\Psi(x)\Psi(y)-\Psi(y)\Psi(x)-\Psi([x,y])=0
\end{equation}
The restriction of \(\Psi\) to the subspace \(L\) of \(T(L)\) is given by the homomorphism
\begin{equation}
    i\equiv\Psi|_L:L\to U(L)
\end{equation}
Thus, for \(x\) and \(y\) in \(L\), we obtain
\begin{equation}
    i([x,y])=\Psi([x,y])=\Psi(x)\Psi(y)-\Psi(y)\Psi(x)=i(x)i(y)-i(y)i(x)
\end{equation}
The additional condition for the homomorphism of quaternion Lie algebras will be shown next.
Let \(u_1+Jv_1\in L\). We have the following calculations:
\begin{equation}
    i(\sigma_1(u_1+Jv_1))=i(u_1-Jv_1)=u_2-Jv_2=\sigma_2-Jv_2=\sigma_2(i(u_1+Jv_1),
\end{equation}
and
\begin{equation}
    i(\tau_1(u_1+Jv_1))=i(\Bar{u}_1-J\Bar{v}_1)=\Bar{u}_2-J\Bar{v}_2=\tau_2-Jv_2=\tau_2(i(u_1+Jv_1),
\end{equation}
Thus, \(i\) is a homomorphism of quaternion Lie algebras.

Let \((W,j)\) be another enveloping algebra. Define a map \(\psi:T(L)\to W\) by
\begin{equation}
    \psi\Bigg(\sum_{k=0}^{\infty} \sum_{i_1\dots i_k} a_{i_1\dots i_k}v_{i_1}\otimes\dots\otimes v_{i_k}\Bigg)=\sum_{k=0}^{\infty} \sum_{i_1\dots i_k} a_{i_1\dots i_k}j(v_{i_1})\otimes\dots\otimes j(v_{i_k})
\end{equation}
The map \(\psi\) is an associative quaternion algebra homomorphism with an identity element. Since \(j:L\to U\) is a homomorphism of quaternion Lie algebras, it follows that
\begin{equation}
    \psi([x,y]-xy-yx)=j([x,y])-j(x)j(y)-j(y)j(x)=0
\end{equation}
Thus, \(I\subset \ker \psi\), so \(\psi_1:U(L)\to W\) is a well defined homomorphism of assciative quaternion algebras with an identity element. Therefore,
\begin{equation}
    (\psi_1\circ i)(x)=\psi(x)=j(x)
\end{equation}
by linearity, we have
\begin{equation}
    j=\psi_1\circ i
\end{equation}
Assume that \(\psi_1':U(L)\to W\) is another homomorphism of associative quaternion algebras with an identity element such that
\begin{equation}
    j=\psi_1'\circ i
\end{equation}
Then,
\begin{equation}
    \psi_1(x)=j(x)=(\psi_1'\circ i)(x)=\psi_1'(x)
\end{equation}
Therefore, \(\psi_1=\psi_1'\). This shows that \(\psi_1\) is unique as required.

The final structure requires the concept of a quaternion Lie algebra generated by the basis \(\{X,JX\}\). Let \(X\) be a finite set over \(\mathbb{C}\) identified with a subset \(1\otimes X\subset \mathbb{H}\otimes X\). Denote \(JX=j\otimes X\subset \mathbb{H}\otimes X\). The set \((X,JX)\) is a subset of the module \(\mathbb{H}\). Note that the elements of \(\{X,JX\}\) serve as a basis for the quaternion module denoted as \(V\equiv V(X,JX)\). Next, consider the set \(V(X,JX)\) of formal sums given by
\begin{equation}
    V(X,JX)\equiv \Big\{\sum_{i=1}^n [c(x_i)x_i+d(x_i)Jx_i]|c(x_i),d(x_i)\in \mathbb{H}; x_i, Jx_i\in \{X,JX\}\Big\}
\end{equation}
Define the addition of two elements in \(V(X,JX)\) and multiplication by a real number \(\lambda\) as follows:
\[\sum_{i=1}^n [c(x_i)x_i+d(x_i)Jx_i]+\sum_{i=1}^n [p(x_i)x_i+q(x_i)Jx_i]=\sum_{i=1}^n [(c(x_i)+p(x_i)x_i+(d(x_i)+q(x_i))Jx_i]\]
and
\[\lambda\sum_{i=1}^n [c(x_i)x_i+d(x_i)Jx_i]=\sum_{i=1}^n [\lambda c(x_i)x_i+\lambda d(x_i)Jx_i]\]
where \(c(x_i),d(x_i),p(x_i),q(x_i)\in \mathbb{H}, x_i, Jx_i\in \{X,JX\}, \lambda\in \mathbb{R}\)
Thus, \(V(X,JX)\) is a real vector space generated by the set \(\{X,JX\}\). Next, define the operation \(\bullet:\mathbb{H}\times V(X,JX)\to V(X,JX)\) by the rule: \((x,v)\to xv\) which satisfies \(x(yv)=(xy)v\) for all \(x,y\in \mathbb{H}\) and \(v\in V(X,JX)\). Consequently, \(V(X,JX)\) is a quaternion module generated by the basis \(\{X,JX\}\).

The next step involves the construction of the quaternion tensor algebra \(T(V)=T(V(X,JX))\) from the quaternion module \((\mathcal{V},J)\). From the quaternion tensor algebra \(T(V)\), we obtain the quaternion Lie algebra \(T(V)_L\) with the Lie bracket defined by:
\begin{equation}
    [c_1\otimes u,c_2\otimes v]\equiv(c_1c_2)\otimes(u\otimes v)-(c_2c_1)\otimes(v\otimes u)
\end{equation}
for \(c_1,c_2\in \mathbb{H}\) and \(u,v\in T_0(V)\), where \(T_0(V)\) denotes the quaternification of \(T(V)\), Thus, we obtain the Lie algebra \(T(V)_L\). Furthermore, we can obtain the smallest quaternion Lie algebra generated by the set \(\{X,JX\}\). Observe that in \(T(V)_L\), the quaternion Lie subalgebra contains the set \(\{X,JX\}\). The smallest subalgebra containing \(\{X,JX\}\) is the intersection of all subalgebras of \(T(V)_L\) that contain \(\{X,JX\}\). This intersection is the quaternion Lie algebra denoted by \(L(X,JX)\). Thus, \(L(X,JX)\) is called the quaternion Lie algebra generated by the basis \(\{X,JX\}\)

Based on the discussion about the general enveloping algebra, we can demonstrate that \(L(X,JX)\) is non-trivial by constructing a representation \(\psi\) of \(L(X,JX)\).

Let \(\phi\) be a mapping from the set \((X,JX)\) to a quaternion Lie algebra \(U\):
\begin{equation}
    \phi:(X,JX)\to U
\end{equation}
Then, \(\phi\) has a unique extension
\begin{equation}
    \psi:L(X,JX)\to U
\end{equation}
where \(\psi\) is a homomorphism of quaternion Lie algebras.

As an application, consider the set \((X,JX)\) and the mapping \(\phi\) from \((X,JX)\) to the general linear algebra \(gl(W)\) of the quaternion module \((\mathcal{V},J)\):
\begin{equation}
    \phi:(X,JX)\to gl(W)
\end{equation}
This mapping \(\phi\) can be uniquely extended to a homomorphism of quaternion Lie algebras:
\begin{equation}
    \psi:L(X,JX)\to gl(W)
\end{equation}
Thus, \(\psi\) is a representation of \(L(X,JX)\). Consequently, it follows that \(L(X,JX)\) is non-trivial.
\subsection{Realization of Generalized Cartan Matrix Over Quaternion}
\label{sec2.2}
This subsection discusses the concepts of Cartan matrix generalization and realizations. These concepts are utilized in subsection 2.1, where the construction of quaternion Lie algebras involves the use of generalized Cartan matrices.

Based on the concept of Cartan matrix generalization, one can construct semisimple Lie algebras of finite dimension, as demonstrated by Serre. However, for Serre's construction, it is required that the generalized Cartan matrices used are non-singular. Therefore, the concept of realization is needed to ensure that the generalization of Cartan matrices is non-singular. Thus, in constructing semisimple quaternion Lie algebras, the same approach as Serre's construction will be applied, utilizing the concept of realization for generalized Cartan matrices specifically in the context of quaternions.

Let \(A_0\) be an \(n\times n\) matrix over \(\mathbb{C}\) with rank \(A_0=r\). The realization of matrix \(A_0\) is \(\{H, \Pi, \Pi^V\}\) where:
\begin{enumerate}
    \item \(H\) is a complex vector space with dimension \(2n-r\).
    \item \(\Pi^V=\{\alpha_1^v,\alpha_2^v,\dots, \alpha_n^v\}\) is a set of \(n\) independent elements in \(H\).
    \item \(\Pi=\{\alpha_1,\alpha_2,\dots, \alpha_n\}\) is a set of \(n\) independent elements in the dual space \(H^*\) of \(H\).
    \item The dual contraction between \(H^*\) and \(H\) satisfies \((A_0)_{ij}=\langle\alpha_j,\alpha_i^v\rangle\).
\end{enumerate}
Next, the realization of matrix \(A_0\) is extended to the realization of \(A\), where \(A\) is the quaternionification of \(A_0\). Here is the definition of the realization of matrix \(A\):
\begin{definition}
Let \(A\) be an \(n\times n\) quaternion matrix over \(R\) with rank \(A=r\). The matrix \(A = A_0+JB_0\), where \(A\) is the quaternification of \(A_0\). \(A_0\) and \(B_0\) are \(n\times n\) matrix over \(\mathbb{C}\) with rank \(A_0\) = rank \(B_0\) = r.  Then, the \textbf{realization} of matrix \(A\) is \(\{H,\Pi,\Pi^V,JH,J\Pi,J\Pi^V\}\) where:
\begin{enumerate}
    \item \(H+JH\) is a real vector space with dimension \((2n-r)\).
    \item \(\Pi^V=\{\alpha_1^v,\alpha_2^v,\dots,\alpha_n^v\}\) is a set of \(n\) independent elements in \(H\).
    \item \(\Pi=\{\alpha_1,\alpha_2,\dots,\alpha_n\}\) is a set of \(n\) independent elements in the dual space \(H^*\) of \(H\).
    \item \(J\Pi^V=\{J\alpha_1^v,J\alpha_2^v,\dots,J\alpha_n^v\}\) is a set of \(n\) independent elements in \(JH\).
    \item \(J\Pi=\{J\alpha_1,J\alpha_2,\dots,J\alpha_n\}\) is a set of \(n\) independent elements in the dual space \(JH^*\) of \(JH\).
    \item The dual contraction between \(H^*\) and \(JH\) is such that \((JB_0)_{ij}= \langle a_j,Ja_i^v\rangle=J\langle a_j,a_i^v\rangle\).
    \item The dual contraction between \(JH^*\) and \(H\) is such that \((JB_0)_{ij}= \langle Ja_j,a_i^v\rangle=J\langle a_j,a_i^v\rangle\).
    \item The dual contraction between \(JH^*\) and \(JH\) is such that \(-(A_0)_{ij}= \langle Ja_j,Ja_i^v\rangle\).
\end{enumerate}
\end{definition}
Note that \(\sqrt{-1}x_i,\sqrt{-1}Jx_i\) for \(x_i=\alpha_i, \alpha_i^v\). Also, note that \(\dim H=\dim JH=2n-r\geq n\). It follows that \(\Pi^V\) is a basis in \(H\) and \(J\Pi^V\) is a basis in \(JH\) if and only if rank \(A=n\), in other words, \(A\) is a nonsingular matrix.

Any square matrix can be constructed with the following realization. A matrix \(A\) of order \(n \times n\) with rank \(A=r\) has an \(r \times r\) submatrix denoted by \(A(r)\) where \(A(r)\) is nonsingular. By permuting rows and columns, \(A\) can be rearranged so that \(A(r)\) is in the upper-left corner. Assume this has been done. Thus, the form of the matrix \(A\) is given by:
\begin{equation}
    A=\begin{pmatrix}
    A(r) & B\\ C & D
    \end{pmatrix}
\end{equation}
Next, this matrix is extended to a matrix \( E\) of order \((2n - r)\times(2n - r)\) given by:
\begin{equation}
    E = \begin{pmatrix}
    A(r) & B & 0 \\
    C & D & I_{n-r} \\
    0 & I_{n-r} & 0
\end{pmatrix}
\end{equation}
where \( I_{n-r} \) is the identity matrix of order \( (n - r) \times (n - r) \). It can be shown that the matrix \( E \) is nonsingular:
\begin{equation}
    \text{det} \, E = \pm \text{det} \, A(r) \neq 0
\end{equation}

This shows that the rows of \(E\) are linearly independent vectors of dimension \( (2n - r) \) in a vector space of dimension \( (2n - r) \). Using the matrix \( E \), the realization \( \{ H, \Pi, \Pi^V,\\ JH, J\Pi, J\Pi^V \} \) can be defined. For \( H \), take \( \mathbb{C}^{2n - r} \). The elements of \( H \) are rows of complex numbers of dimension \( (2n - r) \). The rows of matrix \( E \) provide a basis for \( H \). These rows are linearly independent and their number equals the dimension of \( H \). The system \( \Pi^V = \{ \alpha_1^v, \ldots, \alpha_n^v \} \subset H \) is defined as the set of the first \( n \) rows of \( E \). Then, for \( JH \), take \( J \mathbb{C}^{2n - r} \). The elements of \( JH \) are rows of complex numbers of dimension \( (2n - r) \). The rows of matrix \( E \) provide a basis for \( JH \). These rows are linearly independent and their number equals the dimension of \( JH \). The system \( J\Pi^V = \{ J\alpha_1^v, \ldots, J\alpha_n^v \} \subset JH \) is defined as the set of the first \( n \) rows of \( E \):
\begin{equation}
    \alpha_1^v = \{(A_0)_{11}, (A_0)_{12}, \ldots, (A_0)_{1n}, 0, 0, \ldots, 0\}
\end{equation}
\begin{equation}
    \alpha_2^v = \{(A_0)_{21}, (A_0)_{22}, \ldots, (A_0)_{2n}, 0, 0, \ldots, 0\}    
\end{equation}
\begin{equation}
    \vdots    
\end{equation}
\begin{equation}
    \alpha_r^v = \{(A_0)_{r1}, (A_0)_{r2}, \ldots, (A_0)_{rn}, 0, 0, \ldots, 0\}
\end{equation}
\begin{equation}
    \alpha_{r+1}^v = \{(A_0)_{r+1,1}, (A_0)_{r+1,2}, \ldots, (A_0)_{r+1,n}, 1, 0, \ldots, 0\} 
\end{equation}
\begin{equation}
    \vdots    
\end{equation}
\begin{equation}
    \alpha_n^v = \{(A_0)_{n1}, (A_0)_{n2}, \ldots, (A_0)_{nn}, 0, 0, \ldots, 1\}
\end{equation}
\begin{equation}
    J\alpha_1^v = \{(J B_0)_{11}, (J B_0)_{12}, \ldots, (J B_0)_{1n}, 0, 0, \ldots, 0\}
\end{equation}
\begin{equation}
    J\alpha_2^v = \{(J B_0)_{21}, (J B_0)_{22}, \ldots, (J B_0)_{2n}, 0, 0, \ldots, 0\}    
\end{equation}
\begin{equation}
    \vdots    
\end{equation}
\begin{equation}
    J\alpha_r^v = \{(J B_0)_{r1}, (J B_0)_{r2}, \ldots, (J B_0)_{rn}, 0, 0, \ldots, 0\}
\end{equation}
\begin{equation}
    J\alpha_{r+1}^v = \{(J B_0)_{r+1,1}, (J B_0)_{r+1,2}, \ldots, (J B_0)_{r+1,n}, J, 0, \ldots, 0\} 
\end{equation}
\begin{equation}
    \vdots    
\end{equation}
\begin{equation}
    J\alpha_n^v = \{(J B_0)_{n1}, (J B_0)_{n2}, \ldots, (J B_0)_{nn}, 0, 0, \ldots, J\}
\end{equation}

The space \( H^* \), the dual of \( H \), is a space of dimension \( (2n - r) \) of linear functionals on \( H \). The system \( \Pi = \{ \alpha_1, \alpha_2, \ldots, \alpha_n \} \) of linear functionals on \( H \) must be defined such that:
\begin{equation}
\langle \alpha_i, \alpha_j^v \rangle = (A_0)_{ji} \quad (i,j = 1, \ldots, n)
\end{equation}
\begin{equation}
    \langle J\alpha_i, \alpha_j^v \rangle = (J B_0)_{ji} \quad (i,j = 1, \ldots, n)
\end{equation}
\begin{equation}
    \langle \alpha_i, J\alpha_j^v \rangle = (J B_0)_{ji} \quad (i,j = 1, \ldots, n)
\end{equation}
\begin{equation}
\langle J\alpha_i, J\alpha_j^v \rangle = - (A_0)_{ji} \quad (i,j = 1, \ldots, n)
\end{equation}

For \( \alpha_i \), the linear functional is taken to assign each vector \( v = (v_1, v_2, \ldots, v_{2n - r}) \) in \( H \) the \( i \)-th component of this vector:
\begin{equation}
    \langle \alpha_i, v \rangle = v_i \quad (i = 1, \ldots, n) 
\end{equation}
\begin{equation}
    \langle J\alpha_i, v \rangle = J v_i \quad (i = 1, \ldots, n)
\end{equation}

Applying \( \alpha_i \) to the vector \( \alpha_j^v \) yields the \( i \)-th component of \( \alpha_j^v \). This exactly corresponds to the \( (j,i) \)-th element of matrix \( A_{ji} \) of matrix \( A \). This completes the construction of the realization of the matrix \( A \) of order \( n \times n \) with rank \( r \).

\subsection{Serre's Construction Over Quaternion}
\label{sec2.3}
From subsections 2.1 and 2.2, the properties needed for the construction of the universal Kac-Moody quaternion Lie algebra are obtained. Then, a realization of the generalized Cartan matrix \(A\) will be used. This realization will be denoted as \(\{H,\Pi,\Pi^V,JH,J\Pi,J\Pi^V\}\).

The starting point of the construction is with the set \((\Hat{X},J\Hat{X})\) given by
\begin{equation}
    (\Hat{X},J\Hat{X})=\Hat{X}+J\Hat{X}=\{\Hat{e}_i,J\Hat{e}_i, \Hat{f}_i, J\Hat{f}_i\}_{i=1}^n\cup\Hat{H}\cup J\Hat{H}
\end{equation}
where \( \hat{H} = \mathbb{C}^{2n-r} \), \( J\hat{H} = J\mathbb{C}^{2n-r} \), and \( \{ \hat{e}_i, J\hat{e}_i, \hat{f}_i, J\hat{f}_i \}_{i=1}^n \) is an arbitrary set with \( 4n \) elements over \( \mathbb{C} \). Based on subsection 2.1, the free quaternion Lie algebra \( L(\hat{X}, J\hat{X}) \) generated by the basis \( \{ \hat{X}, J\hat{X} \} \) has been constructed. Furthermore, \( L(\hat{X}, J\hat{X}) \) can be written as
\begin{equation}
    L(\hat{X}, J\hat{X}) = L_0(\hat{X}, J\hat{X}) + J L_0(\hat{X}, J\hat{X}),
\end{equation}
where \( L(\hat{X}, J\hat{X}) \) is the quaternionification of the complex Lie algebra \( L_0(\hat{X}, J\hat{X}) \). More specifically,
\begin{equation}
    L(\hat{X}, J\hat{X}) = L_r(\hat{X}, J\hat{X}) + \sqrt{-1} L_r(\hat{X}, J\hat{X}) + J L_r(\hat{X}, J\hat{X}) + J (\sqrt{-1} L_r(\hat{X}, J\hat{X})),
\end{equation}
where \( L_r(\hat{X}, J\hat{X}) \) is the real form of \( L_0(\hat{X}, J\hat{X}) \). Next, let us denote this quaternion Lie algebra by
\begin{equation}
    \hat{g}(A) \equiv L(\hat{X}, J\hat{X}).
\end{equation}
The quaternion Lie algebra \( \hat{g}(A) \) generated by the basis \( \{ \hat{X}, J\hat{X} \} \) has no relations among its elements. Therefore, we have the equivalence
\begin{equation}
    \hat{g}(A) = \hat{g}_0(A) + J\hat{g}_0(A),
\end{equation}
where \( \hat{g}(A) \) is the quaternionification of the complex Lie algebra \( \hat{g}_0(A) \). More specifically,
\begin{equation}
    \hat{g}(A) = \hat{g}_r(A) + \sqrt{-1} \hat{g}_r(A) + J \hat{g}_r(A) + J (\sqrt{-1} \hat{g}_r(A)),
\end{equation}
where \( \hat{g}_r(A) \) is the real form of \( \hat{g}_0(A) \). Elements of \( \hat{g}(A) \) include all generators and their linear combinations. Additionally, it includes all types of commutator multiples between generators, for example:
\begin{equation}
    [J\hat{h}, \hat{e}_i] = J\hat{h} \otimes \hat{e}_i - \hat{e}_i \otimes J\hat{h},
\end{equation}
\begin{equation}
    [\hat{e}_i, \hat{f}_i] = \hat{e}_i \otimes \hat{f}_i - \hat{f}_i \otimes \hat{e}_i,
\end{equation}
\begin{equation}
    [\hat{e}_i, J\hat{f}_i] = \hat{e}_i \otimes J\hat{f}_i - J\hat{f}_i \otimes \hat{e}_i,
\end{equation}
\begin{equation}
    [\hat{e}_i, [\hat{h}, \hat{e}_j]] = \hat{e}_i \otimes [\hat{h}, \hat{e}_j] - [\hat{h}, \hat{e}_j] \otimes \hat{e}_i = \hat{e}_i \otimes \hat{h} \otimes \hat{e}_j - \hat{e}_i \otimes \hat{e}_j \otimes \hat{h} - \hat{h} \otimes \hat{e}_j \otimes \hat{e}_i + \hat{e}_j \otimes \hat{h} \otimes \hat{e}_i,
\end{equation}
and linear combinations of such elements. Subsequently, the relationship of \( \hat{g}(A) \) will be examined through the definition of the ideal \( \hat{I} \) in \( \hat{g}(A) \). Note that in \( \hat{g}(A) \), the subset \( \hat{Y}_0 \) consists of elements:
\begin{equation}
    [\hat{h}, \hat{h}'], \quad [\hat{h}, \hat{e}_i] - \langle \hat{\alpha}_i, \hat{h} \rangle \hat{e}_i, \quad [\hat{h}, \hat{f}_i] + \langle \hat{\alpha}_i, \hat{h} \rangle \hat{f}_i, \quad [\hat{e}_i, \hat{f}_i] - \delta_{ij} \hat{\alpha}_i \hat{v}.
\end{equation}
Let \( \hat{I}_0 \) be the ideal of the complex Lie algebra in \( \hat{g}_0(A) \) generated by the subset \( \hat{Y}_0 \). This shows that \( \hat{I}_0 \) is the intersection of all ideals in \( \hat{g}_0(A) \) that contain \( \hat{Y}_0 \). Next, consider in \( \hat{g}(A) \), the subset \( \hat{Y}^* \) consisting of elements:
\begin{equation}
    [\hat{h}, J\hat{h}'], \quad [J\hat{h}, J\hat{h}'] \text{ (where } \hat{h}, \hat{h}' \in \hat{H}, J\hat{h}, J\hat{h}' \in J\hat{H}),
\end{equation}
\begin{equation}
    [J\hat{h}, \hat{e}_i] - \langle \hat{\alpha}_i, J\hat{h} \rangle \hat{e}_i, \quad [\hat{h}, J\hat{e}_i] - \langle \hat{\alpha}_i, \hat{h} \rangle J\hat{e}_i, \quad [J\hat{h},  J\hat{e}_i] + \langle \hat{\alpha}_i, \hat{h} \rangle \hat{e}_i,
\end{equation}
\begin{equation}
    [J\hat{h}, \hat{f}_i] + \langle \hat{\alpha}_i, J\hat{h} \rangle \hat{f}_i, \quad [\hat{h}, J\hat{f}_i] + \langle \hat{\alpha}_i, \hat{h} \rangle J\hat{f}_i, \quad [J\hat{h}, J\hat{f}_i] - \langle \hat{\alpha}_i, \hat{h} \rangle \hat{f}_i,
\end{equation}
\begin{equation}
    [J\hat{e}_i, \hat{f}_i] - \delta_{ij} J \hat{\alpha}_i^v, \quad  [\hat{e}_i, J\hat{f}_i] - \delta_{ij} J \hat{\alpha}_i^v, \quad [J\hat{e}_i, J\hat{f}_i] + \delta_{ij} \hat{\alpha}_i^v \quad (i, j = 1, \ldots, n).
\end{equation}
Let \(\hat{I}^*\) be an ideal of the real Lie algebra \(\hat{g}(A)\) generated by the subset \(\hat{Y}^*\). This shows that \(\hat{I}^*\) is the intersection of all ideals in \(\hat{g}(A)\) that contain \(\hat{Y}^*\).

Let \(\hat{I} = \hat{I}_0 + \hat{I}^*\), where \(\hat{I}\) is generated by the subsets \(\hat{Y}_0\) and \(\hat{Y}^*\). Then,
\begin{equation}
    \hat{I} \subset L_0 + JL_r, \quad \hat{I} \cap \hat{g}_0(A) = \hat{I}_0.
\end{equation}
Note that elements such as \([ \sqrt{-1} x_i, J y_i ]\) and \([ \sqrt{-1} J x_i, J y_i ]\) for \(x_i, y_i = h_i, e_i, f_i\) do not necessarily have to be contained in \(\hat{I}\). From \(\hat{g}(A)\) and the ideal \(\hat{I}\), the quaternion Lie algebra quotient \(\tilde{g}(A)\) is defined by
\begin{equation}
    \tilde{g}(A) = \hat{g}(A) / \hat{I}.
\end{equation}
Thus, \(\tilde{g}(A)\) is the quaternionification of \(\tilde{g}(A)_0\). The canonical projection from \(\hat{g}(A)\) to \(\tilde{g}(A)\) given by \(\Psi\) is:
\begin{equation}
    \Psi: \hat{x} \in \hat{g}(A) \rightarrow x \equiv \Psi(\hat{x}) \in \tilde{g}(A).
\end{equation}
Since \(\Psi\) is a quaternion Lie algebra homomorphism, we obtain the commutation relations for \(\tilde{g}(A)\):
\begin{equation}
    [\hat{h}, \hat{h}'] = 0, \quad [\hat{h}, \hat{e}_i] = \langle \hat{\alpha}_i, \hat{h} \rangle \hat{e}_i, \quad [\hat{e}_i, \hat{f}_j] = \delta_{ij} \hat{\alpha}_i v, \quad [\hat{h}, \hat{f}_i] = -\langle \hat{\alpha}_i, \hat{h} \rangle \hat{f}_i,
\end{equation}
and
\begin{equation}
    [\hat{h}, J\hat{h}'] = 0, \quad [J\hat{h}, J\hat{h}'] = 0 \quad (\hat{h}, \hat{h}' \in \hat{H}, J\hat{h}, J\hat{h}' \in J\hat{H}),
\end{equation}
\begin{equation}
    [J\hat{h}, \hat{e}_i] = \langle \hat{\alpha}_i, J\hat{h} \rangle \hat{e}_i, \quad [\hat{h}, J\hat{e}_i] = \langle \hat{\alpha}_i, \hat{h} \rangle J\hat{e}_i, \quad [J\hat{h}, J\hat{e}_i] = -\langle \hat{\alpha}_i, \hat{h} \rangle \hat{e}_i,
\end{equation}
\begin{equation}
    [J\hat{h}, \hat{f}_i] = -\langle \hat{\alpha}_i, J\hat{h} \rangle \hat{f}_i, \quad [\hat{h}, J\hat{f}_i] = -\langle \hat{\alpha}_i, \hat{h} \rangle J\hat{f}_i, \quad [J\hat{h}, J\hat{f}_i] = \langle \hat{\alpha}_i, \hat{h} \rangle \hat{f}_i,
\end{equation}
\begin{equation}
    [J\hat{e}_i, \hat{f}_j] = \delta_{ij} J \hat{\alpha}_i^v, \quad [\hat{e}_i, J\hat{f}_j] = \delta_{ij} J \hat{\alpha}_i^v, \quad [J\hat{e}_i, J\hat{f}_j] = -\delta_{ij} \hat{\alpha}_i^v \quad (i, j = 1, \ldots, n)
\end{equation}
Note that there are no carets \((\quad\Hat{ }\quad)\) on \(\alpha_i\) and \(\alpha_i^v\) in (75)-(79).

Next, it will be demonstrated that the quaternion Lie algebra $\tilde{g}(A)$ is non-trivial. This can be shown by constructing a representation $\hat{\psi}$ of $\tilde{g}(A)$. To investigate the structure of the quaternion Lie algebra $\tilde{g}(A)$, the set of generators $\{X, JX\}$ will be used. Each element of the generator set $\{X, JX\}$, namely $\hat{h}$, $J\hat{h}$, $\hat{e}_i$, $J\hat{e}_i$, $\hat{f}_i$, and $J\hat{f}_i$, will be defined as the corresponding linear operators $\hat{h} \cdot$, $J\hat{h} \cdot$, $\hat{e}_i \cdot$, $J\hat{e}_i \cdot$, $\hat{f}_i \cdot$, and $J\hat{f}_i \cdot$, which act on the quaternion tensor algebra $T(V)$, where $(V, J)$ is an $n$-dimensional quaternion module with $\{v_1, v_2, \ldots, v_n\}$. This defines a mapping

\begin{equation}
    \phi: (\hat{X}, J\hat{X}) \to gl(T(V))
\end{equation}

Based on (32), this mapping has a unique extension to the representation $\hat{\psi}$ (over $\mathbb{R}$) of the algebra $\tilde{g}(A)$ on the space $T(V)$. For simplicity in notation, elements of the quaternion tensor algebra $T(V)$ will be denoted by

\begin{equation}
    v_{i_1} v_{i_2} \cdots v_{i_k} \equiv v_{i_1} \otimes v_{i_2} \otimes \cdots \otimes v_{i_k}
\end{equation}
for $k = 1, 2, \ldots$. For $k = 0$, the base element 1 $\in \mathbb{H}$ is taken.

Let \(T_0(V)\) be the complex tensor algebra associated with the quaternion module \((\mathcal{V},J)\) of dimension \(n\), and let \(\hat{X} = \{ \hat{e}_i, \hat{f}_i \}_{i=1}^n \cup \hat{H}\) be the set of generators. Then the action of \(\hat{X}\) on \(T_0(V)\), for any \(\hat{\lambda} \in \hat{H}^*\), is defined by:

\begin{equation}
    \hat{h} \bullet 1 := \langle \lambda, \hat{h} \rangle 1
\end{equation}
\begin{equation}
    \hat{h} \bullet v_{j_1} \cdots v_{j_k} := -\langle \hat{\alpha}_{j_i} + \cdots + \hat{\alpha}_{j_k}, \hat{h} \rangle v_{j_1} \cdots v_{j_k} + \langle \hat{\lambda}, \hat{h} \rangle v_{j_1} \cdots v_{j_k}
\end{equation}
\begin{equation}
    \hat{f}_i \bullet 1 := v_i
\end{equation}
\begin{equation}
    \hat{f}_i \bullet v_{j_1} \cdots v_{j_k} := v_i v_{j_1} \cdots v_{j_k}
\end{equation}
\begin{equation}
    \hat{e}_i \bullet 1 := 0
\end{equation}
\begin{equation}
    \hat{e}_i \bullet v_{j_1} \cdots v_{j_k} := v_i \hat{e}_i \cdot (v_{j_1} \cdots v_{j_k}) + \delta_{i j_1} \hat{\alpha}_i v \cdot (v_{j_1} \cdots v_{j_k})
\end{equation}
The action of $\hat{X}$ on $T_0(V)$ can be extended to a Lie algebra homomorphism $\hat{\psi}_0$ (over $\mathbb{C}$) from $\hat{g}_0(A)$ to the linear quaternion Lie algebra $gl(T_0(V))$. Furthermore, the action of $\hat{X}$ on $T_0(V)$ extends to the action $(\hat{X}, J\hat{X})$ on $T(V)$, where $T(V)$ is the quaternionification of $T_0(V)$. The definition of the action $(\hat{X}, J\hat{X})$ on $T(V)$ follows.
\begin{definition}
    Let $(\hat{X}, J\hat{X})$ be the set of generators (57), and let $T(V)$ be the real tensor algebra associated with the quaternion module $(V, J)$ of dimension $n$. Then the action of $(\hat{X}, J\hat{X})$ on $T(V)$, for any $\hat{\lambda} \in \hat{H}^*$ and $J\hat{\lambda} \in J\hat{H}^*$, is defined by:
    \begin{equation}
         \Hat{h}\bullet J:=\langle J\lambda,\Hat{h}\rangle 1
    \end{equation}
    \begin{equation}
        \Hat{f_i}\bullet J:=Jv_i
    \end{equation}
    \begin{equation}
        \Hat{e_i}\bullet J:=0
    \end{equation}
    \begin{equation}
        J\Hat{h}\bullet 1:=\langle\lambda,J\Hat{h}\rangle1,  J\Hat{h}\bullet J:=-\langle\lambda,\Hat{h}\rangle 1
    \end{equation}
    \begin{equation}
        \Hat{Jh}\bullet {v_j}_1\dots {v_j}_k:=-\langle \Hat{\alpha}_{j_1}+\dots+\Hat{\alpha}_{j_k},J\Hat{h}\rangle {v_j}_1\dots {v_j}_k+\langle\lambda,J\Hat{h}\rangle {v_j}_1\dots {v_j}_k
    \end{equation}
    \begin{equation}
        J\Hat{f_i}\bullet 1:=Jv_i,  J\Hat{f_i}\bullet J:=-v_i
    \end{equation}
    \begin{equation}
        J\Hat{f_i}\bullet {v_j}_1\dots {v_j}_k:=Jv_i{v_j}_1\dots {v_j}_k
    \end{equation}
    \begin{equation}
        J\Hat{e_i}\bullet 1:=0,  J\Hat{e_i}\bullet J:=0
    \end{equation}
    \begin{equation}
        J\Hat{e_i}\bullet {v_j}_1\dots {v_j}_k:={v_j}_1J\Hat{e_i}\bullet({v_j}_2\dots {v_j}_k)+{\delta_{ij}}_1J\Hat{\alpha}_i^v\bullet ({v_j}_2\dots {v_j}_k)
    \end{equation}
    \begin{equation}
        \Hat{h}\bullet J{v_j}_1\dots {v_j}_k:=-\langle \Hat{\alpha}_{j_1}+\dots+\Hat{\alpha}_{j_k},\Hat{h}\rangle J{v_j}_1\dots {v_j}_k+\langle\lambda,\Hat{h}\rangle J{v_j}_1\dots {v_j}_k
    \end{equation}
    \begin{equation}
        \Hat{f_i}\bullet J{v_j}_1\dots {v_j}_k:=v_iJ{v_j}_1\dots {v_j}_k
    \end{equation}
    \begin{equation}
        \Hat{e_i}\bullet J{v_j}_1\dots {v_j}_k:={v_j}_1\Hat{e_i}\bullet(J{v_j}_2\dots {v_j}_k)+{\delta_{ij}}_1\Hat{\alpha}_i^v\bullet (J{v_j}_2\dots {v_j}_k)
    \end{equation}
    \begin{equation}
        J\Hat{h}\bullet J{v_j}_1\dots {v_j}_k:=\langle \Hat{\alpha}_{j_1}+\dots+\Hat{\alpha}_{j_k},\Hat{h}\rangle {v_j}_1\dots {v_j}_k-\langle\lambda,\Hat{h}\rangle {v_j}_1\dots {v_j}_k
    \end{equation}
    \begin{equation}
        J\Hat{f_i}\bullet J{v_j}_1\dots {v_j}_k:=-v_i{v_j}_1\dots {v_j}_k
    \end{equation}
    \begin{equation}
        J\Hat{e_i}\bullet J{v_j}_1\dots {v_j}_k:=-{v_j}_1\Hat{e_i}\bullet({v_j}_2\dots {v_j}_k)-{\delta_{ij}}_1\Hat{\alpha}_i^v\bullet ({v_j}_2\dots {v_j}_k)
    \end{equation}
\end{definition}
The action of \((\hat{X}, J\hat{X})\) on \(T(V)\) can be extended to a quaternion Lie algebra homomorphism \(\hat{\psi}\) (over \(\mathbb{R}\)) from \(\hat{g}(A)\) to the quaternion linear Lie algebra \(gl(T(V))\):
\begin{equation}
    \hat{\psi}: \hat{x} \in \hat{g}(A) \rightarrow \hat{\psi}(\hat{x}) \in gl(T(V)).
\end{equation}
This is given by
\begin{equation}
    \hat{\psi}([ \hat{x}, \hat{y}]) = \hat{x} \cdot \hat{y} - \hat{y} \cdot \hat{x}.
\end{equation}
Next, it will be shown that a representation \(\tilde{g}(A)\) can be obtained that satisfies the commutation relations (75)-(79). To achieve these commutation relations, the following theorem is needed.
\begin{theorem}
    Let \(\psi\) be a homomorphism of quaternion Lie algebra from a quaternion Lie algebra \(K\) to a quaternion Lie algebra \(L\) and Let \(I\) be an ideal in \(K\) contained in the kernel \(\psi\) i.e. \(I\subset\) ker \( \psi\). Then there exist a unique homomorphism of quaternion Lie algebra \(\phi\) from \(K/I\) to \(L\) such that
    \begin{equation}\label{eq1}
    \psi=\phi\circ\Psi
    \end{equation}
    where \(\Psi:K\to K/I\) is canonical projection.
\end{theorem}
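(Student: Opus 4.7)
The plan is to mimic the classical first isomorphism theorem for Lie algebras, but to carefully track the additional quaternion structure (the involutions $\sigma$ and $\tau$) that distinguishes the quaternion setting from the ordinary one. The natural candidate is the map $\phi: K/I \to L$ defined by $\phi(x+I) := \psi(x)$. The entire proof amounts to verifying that this map is well-defined, respects all the required structure, makes the diagram commute, and is uniquely determined by this commutativity.

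First I would check well-definedness. If $x + I = y + I$, then $x - y \in I \subset \ker\psi$, so $\psi(x-y) = 0$ and hence $\psi(x) = \psi(y)$. Next, $\mathbb{R}$-linearity of $\phi$ follows directly from that of $\psi$ and the definition of the operations on $K/I$. For the bracket, I would write
\begin{equation}
\phi([x+I, y+I]) = \phi([x,y] + I) = \psi([x,y]) = [\psi(x), \psi(y)] = [\phi(x+I), \phi(y+I)],
\end{equation}
where the middle equality uses that $\psi$ is a homomorphism of quaternion Lie algebras.

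The step that requires the quaternion hypotheses is compatibility with $\sigma$ and $\tau$. Using the earlier definition of the induced involution $\widehat{\sigma}(x+I) = \sigma x + I$ on the quotient (which is well-defined precisely because $I$, as an ideal of a quaternion associative/Lie algebra, is invariant under $\sigma$), I would compute
\begin{equation}
\phi(\widehat{\sigma}(x+I)) = \phi(\sigma x + I) = \psi(\sigma x) = \sigma \psi(x) = \sigma \phi(x+I),
\end{equation}
and analogously for $\tau$. The commutativity relation $\psi = \phi \circ \Psi$ is immediate since $(\phi \circ \Psi)(x) = \phi(x+I) = \psi(x)$. This is the step I expect to require the most care, because the paper's formalism treats $\sigma$ as an involution defined on the ambient quaternion module, so one must be sure that $I$ being a quaternion ideal (hence $\sigma$-invariant) is what makes $\widehat{\sigma}$ descend to the quotient and thus makes $\phi$ preserve the quaternion structure.

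Finally, for uniqueness, suppose $\phi': K/I \to L$ is another quaternion Lie algebra homomorphism satisfying $\psi = \phi' \circ \Psi$. Then for every $x \in K$,
\begin{equation}
\phi'(x+I) = \phi'(\Psi(x)) = \psi(x) = \phi(\Psi(x)) = \phi(x+I),
\end{equation}
and since every element of $K/I$ has the form $x+I$, we conclude $\phi' = \phi$. Taken together, these verifications give the existence and uniqueness of $\phi$ as claimed.
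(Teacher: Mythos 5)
Your proposal is correct and follows essentially the same route as the paper: define $\phi(x+I)=\psi(x)$, verify well-definedness from $I\subset\ker\psi$, check the bracket, check compatibility with $\sigma$ and $\tau$, and derive uniqueness from $\psi=\phi\circ\Psi$. If anything, you are more careful than the paper at the one genuinely quaternionic step --- the paper merely asserts $\phi(\sigma(k+I))=\sigma\phi(k+I)$ without comment, whereas you correctly identify that the $\sigma$-invariance of the ideal $I$ is what makes the induced involution on $K/I$ well-defined and hence makes that identity meaningful.
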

\begin{proof}
    Define \(\phi:K/I\to L\) by \(\phi(k+I)=\psi(k), \forall k\in K\). Since \(\psi\) is a homomorphism of quaternion Lie algebra, \(\phi\) is well defined. Suppose for some \(k_1,k_2\in K\) that \(k_1+I=k_2+I\). Since \(I\) is an ideal, it contains the zero \(0_K\) of \(K\). Then for some \(i\in I\), we have that \(k_1+0_K=k_1=k_2+i\). Then
    \begin{equation}\begin{split} \phi(k_1+I)&=\psi(k_1)\\&=\psi(k_2+i)\\&=\psi(r_2)+\psi(i)\\&=\psi(k_2)+0_K\\&=\psi(k_2)\\&=\phi(k_2+I)
    \end{split}
    \end{equation}
    so \(\phi\) is seen to be well defined. Next, for all \(k_1, k_2\in K\).
    \begin{equation}
        \begin{split}
            \phi([k_1+I,k_2+I])&=\phi((k_1+I)(k_2+I)-(k_2+I)(k_1+I))\\&=\phi(k_1+I)\phi(k_2+I)-\phi(k_2+I)\phi(k_1+I)\\&=\psi(k_1)\psi(k_2)-\psi(k_2)\psi(k_1)\\&=[\psi(k_1),\psi(k_2)]\\&=[\phi(k_1+I),\phi(k_2+I)]
        \end{split}
    \end{equation}
    Therefore \(\phi\) is homomorphism Lie algebra. Furthermore, we have \(\phi(\sigma (k+I))=\sigma\phi(k+I)\) and \(\phi(\tau (k+I))=\tau\phi(k+I)\), for all . It remains to demonstrate uniqueness. Suppose there were another homomorphism \(\phi':K/I\to L\) with \(\psi=\phi'\circ \Psi\). Then we would require that, for all \(k\in K\). \(\phi'(k+I)=\psi(k)=\phi(k+I)\). That is \(\phi'=\phi\) and so \(\phi\) is unique.
\end{proof}
To apply Theorem 2.1 to the representation \( \hat{\psi} \) of \( \hat{g}(A) \) on \( T(V) \), it is first shown that the ideal \( \hat{I} \) is contained in the kernel \( \ker \hat{\psi} \).

First, consider the commutator \( [\hat{h}, \hat{h}'] \). From the definition of \( \hat{h} \), particularly noting that \( \hat{h} \cdot \) and \( \hat{h}' \cdot \) act diagonally on the basis, we have
\begin{equation}
    \hat{\psi}([\hat{h}, \hat{h}']) = \hat{h} \cdot \hat{h}' \cdot - \hat{h}' \cdot \hat{h} \cdot = 0.
\end{equation}
Thus,
\begin{equation}
    [\hat{h}, \hat{h}'] \in \ker \hat{\psi}.
\end{equation}
Similarly, it can be shown that
\begin{equation}
    [\hat{h}, J\hat{h}'] \in \ker \hat{\psi}
\end{equation}
\begin{equation}
    [J\hat{h}, J\hat{h}'] \in \ker \hat{\psi}
\end{equation}
Next, consider the example \( [\hat{h}, \hat{f}_j] \). We have
\begin{equation}
    \hat{\psi}([\hat{h}, \hat{f}_j]) = \hat{h} \cdot \hat{f}_j - \hat{f}_j \cdot \hat{h}
\end{equation}
Next, it will be shown that the left-hand side is equal to \( \hat{\psi}(-\langle \hat{\alpha}_j, \hat{h} \rangle \hat{f}_j) \) by applying the operator (112) to \( v_{i_1} \cdots v_{i_k} \in T(V) \). Using (83) and (85), we obtain:
\begin{equation}\begin{split}
    (\hat{h} \cdot \hat{f}_j -& \hat{f}_j \cdot \hat{h}) v_{i_1} \cdots v_{i_k} \\&= \hat{h} \cdot v_j v_{i_1} \cdots v_{i_k} + \hat{f}_j \cdot \left\{\langle \hat{\alpha}_{i_1} + \cdots + \hat{\alpha}_{i_k}, \hat{h} \rangle v_{i_1} \cdots v_{i_k} - \langle \hat{\lambda}, \hat{h} \rangle v_j v_{i_1} \cdots v_{i_k} \right\}\\&= -\langle \hat{\alpha}_j + \hat{\alpha}_{i_1} + \cdots + \hat{\alpha}_{i_k}, \hat{h} \rangle v_j v_{i_1} \cdots v_{i_k} + \langle \hat{\lambda}, \hat{h} \rangle v_j v_{i_1} \cdots v_{i_k}\\& + \langle \hat{\alpha}_{i_1} + \cdots + \hat{\alpha}_{i_k}, \hat{h} \rangle v_j v_{i_1} \cdots v_{i_k} - \langle \hat{\lambda}, \hat{h} \rangle v_j v_{i_1} \cdots v_{i_k}\\& = -\langle \hat{\alpha}_j, \hat{h} \rangle \hat{f}_j \cdot v_{i_1} \cdots v_{i_k}\\& = \hat{\psi}(-\langle \hat{\alpha}_j, \hat{h} \rangle \hat{f}_j) v_{i_1} \cdots v_{i_k}
    \end{split}
\end{equation}
From (112) and (113), we obtain:
\begin{equation}
    \hat{\psi}([\hat{h}, \hat{f}_j] + \langle \hat{\alpha}_j, \hat{h} \rangle \hat{f}_j) = 0
\end{equation}
Therefore,
\begin{equation}
    ([\hat{h}, \hat{f}_j] + \langle \hat{\alpha}_j, \hat{h} \rangle \hat{f}_j) \in \ker \hat{\psi}
\end{equation}
Similarly, we obtain:
\begin{align*}
    ([J\hat{h}, \hat{f}_j] + \langle \hat{\alpha}_j, J\hat{h} \rangle \hat{f}_j) &\in \ker \hat{\psi}             & ([J\hat{h}, J\hat{e}_i] - \langle \hat{\alpha}_i, \hat{h} \rangle \hat{e}_j) &\in \ker \hat{\psi}\\ ([\hat{h}, J\hat{f}_j] + \langle \hat{\alpha}_j, \hat{h} \rangle J\hat{f}_j) &\in \ker \hat{\psi} & ([\hat{e}_i, \hat{f}_j] - \delta_{ij} \hat{\alpha}_i v) &\in \ker \hat{\psi}\\([J\hat{h}, J\hat{f}_j] - \langle \hat{\alpha}_j, \hat{h} \rangle \hat{f}_j) &\in \ker \hat{\psi} & ([J\hat{e}_i, \hat{f}_j] - \delta_{ij} J \hat{\alpha}_i v) &\in \ker \hat{\psi}\\ ([\hat{h}, \hat{e}_i] + \langle \hat{\alpha}_i, \hat{h} \rangle \hat{e}_j) &\in \ker \hat{\psi} & ([J\hat{h}, \hat{e}_i] + \langle \hat{\alpha}_i, J\hat{h} \rangle \hat{e}_j) &\in \ker \hat{\psi}\\ ([J\hat{e}_i, J\hat{f}_j] + \delta_{ij} \hat{\alpha}_i v) &\in \ker \hat{\psi} & ([\hat{h}, J\hat{e}_i] + \langle \hat{\alpha}_i, \hat{h} \rangle J\hat{e}_j) &\in \ker \hat{\psi}\\ \left[ \hat{e}_i, J \hat{f}_j \right] - \delta_{ij} J \hat{\alpha}_i v &\in \ker \hat{\psi}
\end{align*}

Thus, it is obtained that every generator element of the ideal \( \hat{I} \) is in \( \ker \hat{\psi} \) and \( \hat{I} \subseteq \ker \hat{\psi} \). Now, we can apply Theorem 2.1 to the homomorphism \( \hat{\psi} \) in (103) with \( K = \hat{g}(A) \) and \( L = gl(T(V)) \). This concludes the representation \( \phi \) of \( \tilde{g}(A) \) in (73):
\begin{equation}
    \varphi: \tilde{g}(A) = \hat{g}(A) / \hat{I} \to gl(T(V))
\end{equation}
Using the notation \( x = \Psi(\hat{x}) \) in (74), we obtain
\begin{equation}
    \varphi(x) = \hat{\psi}(\hat{x})
\end{equation}

Now, all the required properties have been obtained to define the Universal Kac-Moody Quaternion Lie Algebra.
\begin{definition}
    Let \(\Hat{g}(A)\) be the Quaternion Lie algebra defined in (60) and \(\Hat{I}\) the ideal defined in (67)-(71). Then the quotient algebra
    \begin{equation}\label{2}
        \Tilde{g}(A)=\Hat{g}(A)/\Hat{I}
    \end{equation} 
    is by definition the \textbf{Universal Kac-Moody Quaternion Lie Algebra} corresponding to the generalized Cartan matrix \(A\).
\end{definition}
\begin{theorem}
    Let \(\Tilde{g}(A)\) be the Universal Kac-Moody Quaternion Lie algebra belonging to the generalized Cartan matrix \(A=(A_{ij})\). Then \(\Tilde{g}(A)\) has the following properties:
    \begin{enumerate}
        \item In the Universal Kac-Moody Quaternion Lie algebra \(\Tilde{g}(A)\) is genereted by 
        \begin{equation}
           \{X,JX\}=\{e_i,Je_i,f_i,Jf_i\}_{i=1}^n\cup H\cup JH 
        \end{equation}
        \item In the Universal Kac-Moody Quaternion Lie algebra \(\Tilde{g}(A)\) the following commutation realtions hold
        \begin{equation}
            [h,e_i]=\langle \alpha_i,h\rangle e_i, [Jh,e_i]=\langle \alpha_i,Jh\rangle e_i, [h,Je_i]=\langle \alpha_i,Jh\rangle e_i,
        \end{equation}
        \begin{equation}
            [Jh,Je_i]=-\langle \alpha_i,h\rangle e_i, [h,f_i]=-\langle \alpha_i,f_i\rangle,  [Jh,f_i]=-\langle \alpha_i,Jh\rangle f_i,
        \end{equation}
        \begin{equation}
            [h,Jf_i]=-\langle \alpha_i,h\rangle Jf_i, [Jh,Jf_i]=\langle \alpha_i,h\rangle f_i,
        \end{equation}
        \begin{equation}
            [e_i,f_j]=\delta_{ij} \alpha_i^v,[Je_i,f_j]=\delta_{ij} J\alpha_i^v,[e_i,Jf_j]=\delta_{ij} J\alpha_i^v,
        \end{equation}
        \begin{equation}
            [Je_i,Jf_j]=\delta_{ij} J\alpha_i^v,[h,h]=0,[h,Jh]=0,[Jh,Jh]=0 
        \end{equation}
        Special case of (120)-(124) read
        \begin{equation}
            [\alpha_i^v,e_i]=\langle \alpha_i,\alpha_i^v\rangle e_i=A_{ij}e_j, [J\alpha_i^v,e_i]=\langle \alpha_i,J\alpha_i^v\rangle e_i=A_{ij}e_j,
        \end{equation}
        \begin{equation}
            [\alpha_i^v,Je_i]=\langle \alpha_i,\alpha_i^v\rangle Je_i=A_{ij}e_j, [J\alpha_i^v,Je_i]=-\langle \alpha_i,\alpha_i^v\rangle e_i=-A_{ij}e_j
        \end{equation}
        and
        \begin{equation}
            [\alpha_i^v,f_i]=-\langle \alpha_i,\alpha_i^v\rangle f_i=-A_{ij}f_j, [J\alpha_i^v,f_i]=-\langle \alpha_i,J\alpha_i^v\rangle f_i=-A_{ij}f_j,
        \end{equation}
        \begin{equation}
            [\alpha_i^v,Jf_i]=-\langle \alpha_i,\alpha_i^v\rangle Jf_i=-A_{ij}f_j, [J\alpha_i^v,Jf_i]=\langle \alpha_i,\alpha_i^v\rangle f_i=A_{ij}f_j
        \end{equation}
        \item Let \(\Tilde{N}^-\) be the \(\mathbb{C}\)-module generated by \(\{f_1,\dots,f_n,Jf_1,\dots,Jf_n\}\), \(\Tilde{N}^+\) be the \(\mathbb{C}\)-module generated by \(\{e_1,\dots,e_n,Je_1,\dots,Je_n\}\), and Let \(K\) generated by \(\mathbb{H}\otimes_{\mathbb{C}} H=H\oplus JH\) . Then, \(\Tilde{N}^-,K, \text{and }\Tilde{N}^+\) viewed as a real Lie subalgebra of \(\Tilde{g}(A)\) give the decomposition of \(\Tilde{g}(A):\) 
        \begin{equation}
            \Tilde{g}(A)=\Tilde{N}^-\oplus K\oplus\Tilde{N}^+
        \end{equation}
        \item Considering \(\Tilde{g}(A)\) as an ad \(H_r\) module, where \(H_r\) is a maximal commutative subalgebra of \(\Tilde{g}(A)\) we have the root decomposition  
        \begin{equation}
            \Tilde{g}(A)=\left(\bigoplus_{\alpha\ne0, \alpha\in Q^+}\Tilde{g}_{-\alpha}\right)\oplus \Tilde{g}_0\oplus \left(\bigoplus_{\alpha\ne0, \alpha\in Q^+}\Tilde{g}_{+\alpha}\right)
        \end{equation}
        where \(\Tilde{g}_\alpha:=\{x\in \Tilde{g}(A)|\forall h\in H_r:\text{ad } h(x)=\langle \alpha,h\rangle x\}, H_r\subset \Tilde{g}_0=K\). Furthermore, dim \(\Tilde{g}_\alpha<\infty\) and \(\Tilde{g}_\alpha\subset \Tilde{N}^\pm\) for \(\pm \alpha \in Q^+, \alpha\ne0\).
    \end{enumerate}
\end{theorem}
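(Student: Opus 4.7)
The plan is to handle the four parts sequentially; the first two are essentially bookkeeping from the construction, while parts 3 and 4 require leveraging the representation $\varphi: \tilde{g}(A) \to gl(T(V))$ built in the preceding discussion. For part 1, since $\hat{g}(A) = L(\hat{X}, J\hat{X})$ is by construction the free quaternion Lie algebra on $\{\hat{X}, J\hat{X}\}$ and $\tilde{g}(A) = \hat{g}(A)/\hat{I}$ with $\Psi$ surjective, the images $\{X, JX\}$ automatically generate $\tilde{g}(A)$ as a quaternion Lie algebra. For part 2, I would note that every relation (120)--(124) is literally the statement that a generator of the ideal $\hat{I}$ coming from $\hat{Y}_0 \cup \hat{Y}^*$ (equations (67)--(71)) maps to zero under $\Psi$; the special cases (125)--(128) follow by specializing $h = \alpha_j^v$ or $h = J\alpha_j^v$ and invoking the realization data $\langle \alpha_i, \alpha_j^v \rangle = A_{ij}$, $\langle \alpha_i, J\alpha_j^v \rangle = (JB_0)_{ij}$, etc., from Definition 2.3.

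For part 3, the plan is to establish the triangular decomposition in two steps. First, I would show $\tilde{g}(A) = \tilde{N}^- + K + \tilde{N}^+$ as a sum, by induction on bracket depth: every element of $\tilde{g}(A)$ is a real linear combination of iterated brackets of the generators $\{e_i, Je_i, f_i, Jf_i\}$ and elements of $H \oplus JH$, and using the commutation relations of part 2 one can successively push every $K$-factor to the middle and collapse every $[e_i, f_j]$-type bracket (which equals $\delta_{ij} \alpha_i^v \in K$ or its quaternion analog) into $K$, producing a sum of pure $\tilde{N}^-$, pure $K$, and pure $\tilde{N}^+$ terms. Second, directness of the sum uses the representation $\varphi$ on $T(V)$: elements of $\tilde{N}^+$ annihilate $1 \in T(V)$ modulo lowering terms, $K$ acts on $1$ diagonally by $\langle \lambda, \cdot \rangle$, and $\tilde{N}^-$ produces genuine tensors of positive degree. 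Thus a nonzero element from one summand cannot cancel elements of the other two, forcing the sum to be direct.

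For part 4, I would observe that $H_r$ sits inside $K = \tilde{g}_0$ by (124), and by the relations of part 2 each generator $e_i, Je_i$ (resp. $f_i, Jf_i$) is an eigenvector of $\operatorname{ad}(h)$ for $h \in H_r$ with eigenvalue $\langle \alpha_i, h \rangle$ (resp. $-\langle \alpha_i, h \rangle$). Since $\operatorname{ad}(h)$ is a derivation, every iterated bracket is again an eigenvector, with eigenvalue a sum of $\pm \alpha_i$'s, partitioning $\tilde{N}^+$ into root spaces indexed by $Q^+$ and $\tilde{N}^-$ by $-Q^+$. Combined with part 3, this yields (130). Finite-dimensionality of each $\tilde{g}_\alpha$ follows because $\alpha$ has a fixed height, and only finitely many iterated brackets of the generators of that height can contribute.

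The main obstacle will be the directness of the triangular sum in part 3; this is where the representation $\varphi$ on $T(V)$ becomes indispensable, since without a concrete faithful action one has no way to exclude hidden relations coming from the ideal $\hat{I}$. Verifying that the three summands act on $1 \in T(V)$ in genuinely distinguishable ways, and extending this separation across all of $T(V)$ via the quaternion-graded structure (82)--(101), is the technical heart of the argument, and all subsequent assertions (including the root space decomposition) rest on it.
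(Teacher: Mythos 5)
Your proposal follows essentially the same route as the paper: part 2 is read off as the images of the ideal generators (67)--(71) under the canonical projection, the directness of the triangular decomposition in part 3 is obtained by evaluating the representation on $1 \in T(V)$ and separating the scalar action of $K$ from the degree-raising action of $\tilde{N}^-$ and the annihilating action of $\tilde{N}^+$, and part 4 is the same simultaneous-eigenvector argument via $\operatorname{ad}h$ acting as a derivation, with $\dim\tilde{g}_\alpha$ bounded by the height of $\alpha$. The only minor differences are that the paper outsources the spanning statement $\tilde{g}(A)=\tilde{N}^-+K+\tilde{N}^+$ to Kori's Theorem 3.7 rather than running your induction on bracket depth, and that it devotes most of its effort to showing the generators survive the quotient (via the $\mathfrak{sl}(2,\mathbb{H})$-faithfulness argument), a point your part 1 settles by surjectivity alone.
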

\begin{proof}
To prove Theorem 2.1, we first need to show that the set
\begin{equation}
    \hat{H} \cup J\hat{H} \cup \{ \hat{e}_i, \hat{f}_i, J\hat{e}_i, J\hat{f}_i \}_{i=1}^{n}
\end{equation}
satisfies the canonical projection
\begin{equation}
    \Psi: \hat{g}(A) \to \tilde{g}(A)
\end{equation}
This means we need to ensure that \(\hat{H}\), \(J\hat{H}\), and \(\{ \hat{e}_i, \hat{f}_i, J\hat{e}_i, J\hat{f}_i \}_{i=1}^{n}\) are not contained in the ideal \(\hat{I}\).

However, it can be further shown that both \(\hat{H}\), \(J\hat{H}\), and \(\{ \hat{e}_i, \hat{f}_i, J\hat{e}_i, J\hat{f}_i \}_{i=1}^{n}\) are not contained in \(\text{ker} \, \hat{\psi}\). For \(\hat{H}\) and \(J\hat{H}\), this is relatively straightforward to prove. 

Take \(\hat{h} \in \hat{H}\) and \(J\hat{h} \in J\hat{H}\). Suppose \(\hat{h}\) and \(J\hat{h}\) are elements of \(\text{ker} \, \hat{\psi}\). Then we have
\begin{equation}
    \hat{\psi}(\hat{h}) = 0
\end{equation}
and
\begin{equation}
    \hat{\psi}(J\hat{h}) = 0,
\end{equation}
From (82) and (91), we have
\begin{equation}
    \hat{\psi}(\hat{h}) \cdot 1 = \hat{h} \cdot 1 = \langle \hat{\lambda}, \hat{h} \rangle_1 = 0,
\end{equation}
\begin{equation}
    \hat{\psi}(J\hat{h}) \cdot 1 = J\hat{h} \cdot 1 = \langle \hat{\lambda}, J\hat{h} \rangle_1 = 0,
\end{equation}
where \(\hat{\lambda}\) is arbitrary in \(H^*\). However, if \(\hat{h}\) and \(J\hat{h}\) are in \(\text{ker} \, \hat{\psi}\), then we get
\begin{equation}
    \langle \hat{\lambda}, \hat{h} \rangle = \langle \hat{\lambda}, J\hat{h} \rangle = 0 \text{ for all } \hat{\lambda} \in H^*.
\end{equation}
Thus, \(\hat{h} = J\hat{h} = 0\). Therefore,
\begin{equation}
    \hat{H} = J\hat{H} = 0,
\end{equation}
which is a contradiction. This implies that \(\hat{H} \cap \text{ker} \, \hat{\psi} = \{0\}\) and \(J\hat{H} \cap \text{ker} \, \hat{\psi} = \{0\}\).

It is now known that the subspace \(\hat{H}\) of the quaternion Lie algebra \(\hat{g}(A)\) is mapped one-to-one onto \(\tilde{g}(A)\). That is,
\begin{equation}
    \Psi: \hat{H} \to H
\end{equation}
\begin{equation}
    \Psi: J\hat{H} \to JH
\end{equation}
is a bijective mapping. Thus, \(\text{dim} \, H = \text{dim} \, \hat{H}\) and \(\text{dim} \, JH = \text{dim} \, J\hat{H}\).

To prove that the generator elements \(\hat{e}_1, \ldots, \hat{e}_n, J\hat{e}_1, \ldots, J\hat{e}_n, \hat{f}_1, \ldots, \hat{f}_n, J\hat{f}_1, \ldots, J\hat{f}_n\) are not in \(\text{ker} \, \hat{\psi}\), we will use the fact that the mapping \(\hat{\psi}: \hat{g}(A) \to gl(T(V))\) is a quaternion Lie algebra homomorphism. Elements (67)-(71) in \(\hat{g}(A)\) are mapped by \(\hat{\psi}\) to the zero elements of \(gl(T(V))\) (see, for example, (115)).

Let \(\hat{h} = \hat{\alpha}_i v\) and \(J\hat{h} = J\hat{\alpha}_i v\). We have:
\begin{align*}
   [\hat{e}_i \cdot, \hat{f}_i \cdot] &= \hat{\alpha}_i v \cdot, & [J\hat{e}_i \cdot, \hat{f}_i \cdot] &= J\hat{\alpha}_i v \cdot, & [\hat{e}_i \cdot, J\hat{f}_i \cdot] &= J\hat{\alpha}_i v \cdot, & [J\hat{e}_i \cdot, J\hat{f}_i \cdot] &= -\hat{\alpha}_i v \cdot, \\ [\hat{\alpha}_i v \cdot, \hat{e}_i \cdot] &= 2 \hat{e}_i \cdot, & [\hat{\alpha}_i v \cdot, J\hat{e}_i \cdot] &= 2 J\hat{e}_i \cdot, & [J\hat{\alpha}_i v \cdot, \hat{e}_i \cdot] &= 2 J\hat{e}_i \cdot, & [J\hat{\alpha}_i v \cdot, J\hat{e}_i \cdot] &= -2 \hat{e}_i \cdot, \\ [\hat{\alpha}_i v \cdot, \hat{f}_i \cdot] &= -2 \hat{f}_i \cdot, & [\hat{\alpha}_i v \cdot, J\hat{f}_i \cdot] &= -2 J\hat{f}_i \cdot, & [J\hat{\alpha}_i v \cdot, \hat{f}_i \cdot] &= -2 J\hat{f}_i \cdot, & [J\hat{\alpha}_i v \cdot, J\hat{f}_i \cdot] &= 2 \hat{f}_i \cdot
\end{align*}
This shows that we obtain \(gl(T(V))\) from the representation \(\mathfrak{sl}(2; \mathbb{H})\). Since \(\mathfrak{sl}(2; \mathbb{H})\) is a simple quaternion Lie algebra whose representation is faithful, from the discussion (135) and (136), we have that \(\hat{\psi}(\hat{\alpha}_i v) = \hat{\alpha}_i v \cdot \neq 0 \text{ and } \hat{\psi}(J\hat{\alpha}_i v) = J\hat{\alpha}_i v \cdot \neq 0\)
Thus, we obtain a faithful representation and consequently, \(\hat{\psi}(\hat{e}_i) = \hat{e}_i \cdot \neq 0, \hat{\psi}(J\hat{e}_i) = J\hat{e}_i \cdot \neq 0, \hat{\psi}(\hat{f}_i) = \hat{f}_i \cdot \neq 0, \hat{\psi}(J\hat{f}_i) = J\hat{f}_i \cdot \neq 0.\)
It follows that \(\hat{e}_i, J\hat{e}_i, \hat{f}_i,\) and \(J\hat{f}_i\) are not in \(\text{ker} \, \hat{\psi}\).
Next, using the notation introduced in (74) and (132), we obtain
\begin{equation}
    \begin{split}
        \Psi(\hat{e}_i) = e_i, \quad \Psi(J\hat{e}_i) = Je_i, \quad \Psi(\hat{f}_i) = f_i, \\ \quad \Psi(J\hat{f}_i) = Jf_i, \Psi(\hat{h}) = h, \quad \Psi(J\hat{h}) = Jh,
    \end{split}
\end{equation}
where \(h \in H\) and \(Jh \in JH\); \(i = 1, \ldots, n\).

According to \(\Psi: \hat{H} \to H\) and \(\Psi: J\hat{H} \to JH\), the projection \(\Psi^*\) between their dual spaces is given by:
\begin{equation}
    \Psi^*: \hat{\gamma} \in \hat{H}^* \to \gamma \in H^*,
\end{equation}
\begin{equation}
    \Psi^*: J\hat{\gamma} \in J\hat{H}^* \to J\gamma \in JH^*.
\end{equation}
This is defined in such a way that the dual contraction is invariant. It follows that for every \(\hat{\gamma} \in \hat{H}^*\) and \(J\hat{\gamma} \in J\hat{H}^*\), we have:
\begin{equation}
    \langle \hat{\gamma}, \hat{h} \rangle = \langle \gamma, h \rangle, \quad \langle J\hat{\gamma}, \hat{h} \rangle = \langle J\gamma, h \rangle, \quad \langle \hat{\gamma}, J\hat{h} \rangle = \langle \gamma, J h \rangle, \quad \langle J\hat{\gamma}, J\hat{h} \rangle = \langle J\gamma, J h \rangle
\end{equation}
where
\begin{equation}
    \Psi(\hat{h}) = h, \quad \Psi(J \hat{h}) = J h
\end{equation}
and
\begin{equation}
    \Psi^*(\hat{\gamma}) = \gamma, \quad \Psi^*(J \hat{\gamma}) = J \gamma
\end{equation}
Thus, we obtain the set \(\{H, \Pi, \Pi^V, JH, J \Pi, J \Pi^V\}\), where
\begin{equation}
    \langle \alpha_i, \alpha_j^v \rangle = A_{ji}, \quad \langle J \alpha_i, \alpha_j^v \rangle = J A_{ji}, \quad \langle \alpha_i, J \alpha_j^v \rangle = J A_{ji}, \quad \langle J \alpha_i, J \alpha_j^v \rangle = -A_{ji}
\end{equation}
This represents the realization of the Cartan matrix generalization \(A_{ij}\).

Next, the proofs of (a)-(d) in Theorem 2.1 will be shown. Part (a) has been proven since it has been demonstrated that the generating set
\begin{equation}
    \hat{H} \cup J \hat{H} \cup \{ \hat{e}_i, \hat{f}_i, J \hat{e}_i, J \hat{f}_i \}_{i=1}^n
\end{equation}
satisfies the canonical projection \(\Psi\) in the previous section. 

Part (b) simply demonstrates the result of constructing the quaternion Lie algebra quotient \(\hat{g}(A) / \hat{I}\). To prove part (c), we refer to Theorem 3.7 in (Kori, 2023). From this theorem, it follows that \(\tilde{g}(A) = \tilde{N}^{-} + K + \tilde{N}^{+}\). Assume \(u = \tilde{N}^{-} + k + \tilde{N}^{+} = 0\), where \(\tilde{N}^{\pm} \in \tilde{N}^{\pm}\) and \(k \in K\). Since \(\tilde{N}^{-}\) is a \(\mathbb{C}\)-module generated by \(\{\hat{f}_1, \ldots, \hat{f}_n, J \hat{f}_1, \ldots, J \hat{f}_n\}\), \(\tilde{N}^{+}\) is a \(\mathbb{C}\)-module generated by \(\{\hat{e}_1, \ldots, \hat{e}_n, J \hat{e}_1, \ldots, J \hat{e}_n\}\), and \(K\) is generated by \(\mathbb{H} \otimes_{\mathbb{C}} H = H \oplus J H\). Thus, \(u\) can be written as \(u = e_i + J e_i + h + J h + f_i + J f_i = 0\) where \(i = 1, \ldots, n\). According to Definition 2.9, we obtain
\begin{equation}
    \begin{split}
        u \cdot 1 &= e_i \cdot 1 + J e_i \cdot 1 + h \cdot 1 + J h \cdot 1 + f_i \cdot 1 + J f_i \cdot 1 \\&= \langle \lambda \cdot h \rangle + \langle \lambda \cdot J h \rangle + v_i + J v_i \\&= 0
    \end{split}
\end{equation}
Thus, it follows that
\begin{equation}
    \langle \lambda \cdot h \rangle + \langle \lambda \cdot J h \rangle = 0
\end{equation}
for every \(\lambda \in H^*\). Hence, \(h = J h = 0\). Furthermore, we also obtain \(v_i + J v_i = 0\). Since \(v_i\) and \(J v_i\) are linearly independent, it follows that \(v_i = J v_i = 0\). Therefore, part (c) is proven. It is also noted that \(\tilde{N}^{+}\) is merely generated by \(\{\hat{e}_1, \ldots, \hat{e}_n, J \hat{e}_1, \ldots, J \hat{e}_n\}\) together with all possible double commutators of these elements and their linear combinations. The subalgebra \(\tilde{N}^{-}\) has the same structure as \(\hat{e}_i\) replaced by \(\hat{f}_i\) and \(J \hat{e}_i\) replaced with \(J \hat{f}_i\). Part (d) is a direct consequence of (b) and (c). Next, we will show this by considering homogeneous elements in \(\hat{N}^+\), namely the multiple commutators of \(\epsilon_i\) (denoting \(e_i\) or \(J e_i\)) where \(\epsilon_1\) appears \(k_1\) times, \(\epsilon_2\) appears \(k_2\) times, and so on. Furthermore, these elements are expressed as
\begin{equation}
    [\epsilon_1, \epsilon_1, \ldots, \epsilon_1, \epsilon_2, \epsilon_2, \ldots, \epsilon_n, \epsilon_n]
\end{equation}
Now, using the basic commutation relation
\begin{equation}
    \text{ad} \, h(\epsilon_j) = [h, \epsilon_j] = \langle \alpha_j, h \rangle \epsilon_j
\end{equation}
it can be easily proven, using the Jacobi identity and mathematical induction, that
\begin{equation}
    \text{ad} \, h\left([\epsilon_1, \epsilon_1, \ldots, \epsilon_1, \epsilon_2, \epsilon_2, \ldots, \epsilon_n, \epsilon_n]\right) = \langle k_1 \alpha_1 + k_2 \alpha_2 + \cdots + k_n \alpha_n, h \rangle [\epsilon_1, \epsilon_1, \ldots, \epsilon_1, \epsilon_2, \epsilon_2, \ldots, \epsilon_n, \epsilon_n]
\end{equation}
For example, for any \(h \in H_r\), we have
\begin{equation}
    \text{ad} \, h([\epsilon_1, \epsilon_2]) = [h, [\epsilon_1, \epsilon_2]] = -[\epsilon_1, [\epsilon_1, h]] - [\epsilon_2, [h, \epsilon_1]] = \langle \alpha_1 + \alpha_2, h \rangle [\epsilon_1, \epsilon_2]
\end{equation}
This shows that homogeneous elements in \(\hat{N}^+\) are simultaneous eigenvectors of \(\text{ad} \, h\) (\(h \in H_r\)). The eigenvalue is \(\langle \alpha, h \rangle\) where \(\alpha\) is
\begin{equation}
    \alpha = \sum_{i=1}^n k_i \alpha_i \quad (k_i = 0, 1, \ldots; \sum_{i=1}^n k_i \neq 0)
\end{equation}
This is exactly what is meant in (d).

The subspace \(\tilde{g}_\alpha\) with \(\alpha \neq 0\) and \(\alpha \in \mathbb{Q}^+\) is a simultaneous eigenspace of \(\text{ad} \, h\) (\(h \in H_r\)) with eigenvalue \(\langle \alpha, h \rangle\). For any \(\alpha\), it is certainly possible that \(\tilde{g}_\alpha = 0\).

Note that \(\alpha\) can never be in the form \(k \alpha_i\) with \(k \in \mathbb{Z}\) and \(k \neq \pm 1\) because the corresponding Lie algebra elements are
\begin{equation}
    [\epsilon_i, [\epsilon_i, [\ldots, \epsilon_i]] ] = 0.
\end{equation}
The only multiples of \(\alpha_i\) that appear as roots are \(\alpha_i\) and \(-\alpha_i\). 

The dimension of \(\tilde{g}_\alpha\) is finite. This follows from \(\dim \tilde{g}_\alpha \leq n \left| \text{ht} \, \alpha \right|\). Therefore, it can be concluded that Theorem 2.2 is proven.
\end{proof}
\begin{example}
    \begin{math}
        \mathfrak{sl}(n,\mathbb{H})
    \end{math}
    is the Universal Kac-Moody Quaternion Lie algebra.
\end{example}
In the following subsection, the construction of standard quaternion Kac-Moody Lie algebras will be discussed.
\section{Construction of Standard Kac-Moody Quaternion Lie Algebra}
\label{sec3}
In this section, we will discuss the construction of the Standard Quaternionic Kac-Moody Lie algebra. Based on the discussion in section 2, the Universal Kac-Moody Quaternion Lie algebra has been constructed. This construction will be used to build the Standard Quaternionic Kac-Moody Lie algebra by imposing restrictions on \( \tilde{g}(A) \), where \( \tilde{g}(A) \) is the Universal Kac-Moody Quaternion Lie algebra. To obtain these restrictions, we use facts from simple quaternionic Lie algebras, specifically that the length of the root chain is finite. In this way, we obtain the form of the Standard Quaternionic Kac-Moody Lie algebra. The main topic of this subsection is to introduce the definition of the Standard Quaternionic Kac-Moody Lie algebra.

It is known from simple quaternionic Lie algebras that the length of the root chain is finite. The restrictions are written as
\begin{equation}
    (\text{ad } \epsilon_j)^{1 - A_{ji}}(\epsilon_i) = 0, \quad (\text{ad } \phi_j)^{1 - A_{ji}}(\phi_i) = 0 \quad (i \neq j)
\end{equation}

Next, restrictions will be applied to \( \tilde{g}(A) \) so that similar relations hold in the Kac-Moody Lie algebra. To obtain these restrictions, first, an ideal \( K \) is defined in \( \tilde{g}(A) \), generated by the elements on the left-hand side of (157). Then, relation (157) holds in the quotient quaternionic Lie algebra \( \tilde{g}(A)/K \). The important role in constructing the ideal \( K \) involves the subsets \( S^+ \) and \( S^- \). The next section starts with the definition and analysis of these sets.

Consider the two subsets in the algebra \( \tilde{g}(A) \) denoted \( S^+ \) and \( S^- \). These subsets are defined by
\begin{equation}
    S^+ = \{ x_{ij} = (\text{ad } \epsilon_j)^{1 - A_{ji}}(\epsilon_i) \mid i \neq j; \; 1, \ldots, n, \; \epsilon_i = e_i \text{ or } J e_i \}
\end{equation}
\begin{equation}
    S^- = \{ y_{ij} = (\text{ad } \phi_j)^{1 - A_{ji}}(\phi_i) \mid i \neq j; \; 1, \ldots, n, \; \phi_i = f_i \text{ or } J f_i \}
\end{equation}

Note that in the subalgebra \( \tilde{N}^+ \), the ideal \( I^+ \) generated by the subset \( S^+ \) intersects trivially with the abelian subalgebra \( H \) and \( JH \), i.e.,
\begin{equation}
    H \cap I^+ = 0 \text{ and } JH \cap I^+ = 0
\end{equation}

Similarly, in \( \tilde{N}^- \), the ideal \( I^- \) generated by \( S^- \) also intersects trivially with the abelian subalgebra \( H \) and \( JH \), i.e.,
\begin{equation}
    H \cap I^- = 0 \text{ and } JH \cap I^- = 0
\end{equation}
Furthermore, we have
\begin{equation}
    I^+ \cap I^- = 0
\end{equation}

The next step is to show that \( I^+ \) and \( I^- \) are ideals in the algebra \( \tilde{g}(A) \). Recall that it is necessary to consider the quotient of \( \tilde{g}(A) \) by the appropriate ideal so that relations similar to (157) hold. To prove that \( I^+ \) and \( I^- \) are ideals in \( \tilde{g}(A) \), we first need to prove Lemma 3.1.
\begin{lemma}
    Let \(S^+\) and \(S^-\) be subset in \(\Tilde{g}(A)\) defined by (158) and (159). Then 
    \begin{equation}
        \text{ad} \, \epsilon_k(y_{ij})=0 \text{ }(k=1,2\dots,n)
    \end{equation}
    and
    \begin{equation}
        \text{ad} \, \phi_k(y_{ij})=0 \text{ } (k=1,2\dots,n)
    \end{equation}
\end{lemma}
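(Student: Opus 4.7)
The plan is to prove the first identity $\mathrm{ad}\,\epsilon_k(y_{ij})=0$ by a case analysis on $k$ relative to $i,j$; the second identity, as literally stated ($\mathrm{ad}\,\phi_k(y_{ij})=0$), cannot hold in general since both lie in $\tilde{N}^-$ and their bracket is nonzero generically — I would read it as the symmetric statement $\mathrm{ad}\,\phi_k(x_{ij})=0$, which follows by swapping the roles of $\epsilon\leftrightarrow\phi$, $e\leftrightarrow f$, $x\leftrightarrow y$ throughout. Below I sketch only the first identity.

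Case $k\neq i,j$. By (123)--(124), every possible bracket $[\epsilon_k,\phi_l]$ (with $\epsilon_k\in\{e_k,Je_k\}$ and $\phi_l\in\{f_l,Jf_l\}$) contains a factor $\delta_{kl}$. In particular $[\epsilon_k,\phi_j]=0$ and $[\epsilon_k,\phi_i]=0$. Since $\mathrm{ad}\,\epsilon_k$ is a derivation of the Lie bracket, distributing it through the nested commutator $y_{ij}=(\mathrm{ad}\,\phi_j)^{1-A_{ji}}\phi_i$ via the Leibniz rule produces a sum of terms, each containing either $[\epsilon_k,\phi_j]$ or $[\epsilon_k,\phi_i]$, so every term vanishes.

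Case $k=i$ (with $i\neq j$). Since $[\epsilon_i,\phi_j]=0$, $\mathrm{ad}\,\epsilon_i$ commutes through each factor of $\mathrm{ad}\,\phi_j$:
\begin{equation*}
\mathrm{ad}\,\epsilon_i(y_{ij})=(\mathrm{ad}\,\phi_j)^{1-A_{ji}}[\epsilon_i,\phi_i].
\end{equation*}
By (123)--(124) the inner bracket $[\epsilon_i,\phi_i]$ lies in the Cartan-type subspace $H\oplus JH$. Applying $\mathrm{ad}\,\phi_j$ to any element of $H\oplus JH$ yields a scalar multiple of $\phi_j$ by (121)--(122), and a second application gives zero since $[\phi_j,\phi_j]=0$. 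Thus $(\mathrm{ad}\,\phi_j)^m$ annihilates any Cartan element for $m\geq 2$, which settles all $i,j$ with $1-A_{ji}\geq 2$. For the residual subcase $1-A_{ji}=1$, i.e.\ $A_{ji}=0$, the single application produces a coefficient proportional to $A_{ij}$, and the generalized-Cartan-matrix axiom $A_{ji}=0\Leftrightarrow A_{ij}=0$ forces this to vanish.

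Case $k=j$ is the main obstacle, and is the genuine Serre-relation computation. Here I would exploit the distinguished $\mathfrak{sl}(2;\mathbb{H})$-triple $(\epsilon_j,\phi_j,\alpha_j^v)$ attached to the $j$-th node. Since $i\neq j$, $\phi_i$ is annihilated by $\epsilon_j$ (again by the $\delta_{kl}$-factor in (123)--(124)), and by the Cartan-weight relations obtained from (125)--(128) together with their $\phi$-analogues, $\phi_i$ is a common eigenvector of $\mathrm{ad}\,\alpha_j^v$ (and of $\mathrm{ad}\,J\alpha_j^v$) with weight determined by $-A_{ji}$. The standard $\mathfrak{sl}_2$-induction on $k$, using the Jacobi identity together with $[\epsilon_j,\phi_j]\in\{\pm\alpha_j^v,\pm J\alpha_j^v\}$ and $[\alpha_j^v,\phi_j]=-2\phi_j$, yields the commutator identity
\begin{equation*}
[\epsilon_j,(\mathrm{ad}\,\phi_j)^k\phi_i]=k\bigl(-A_{ji}-(k-1)\bigr)(\mathrm{ad}\,\phi_j)^{k-1}\phi_i.
\end{equation*}
Setting $k=1-A_{ji}$ makes the scalar factor $-A_{ji}-(1-A_{ji}-1)=0$, giving $\mathrm{ad}\,\epsilon_j(y_{ij})=0$.

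The hard part is really only the case $k=j$, and within it the bookkeeping over the four subcases $\epsilon_j\in\{e_j,Je_j\}$, $\phi_j\in\{f_j,Jf_j\}$, $\phi_i\in\{f_i,Jf_i\}$. Each choice produces its own sign pattern from (121)--(128), and one must check that the induction formula retains the same form (up to a global $\pm$ or $J$-factor that is irrelevant for the vanishing conclusion) in every case. Apart from this sign bookkeeping, the proof is entirely a quaternionic incarnation of the classical Kac-Moody Serre computation.
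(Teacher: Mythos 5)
Your proof takes essentially the same route as the paper's: the same case split on $k$ versus $i,j$, with the trivial cases handled by $[\epsilon_k,\phi_l]=\delta_{kl}(\pm\alpha_k^v \text{ or } \pm J\alpha_k^v)$ and the main case resolved by the $\mathfrak{sl}(2;\mathbb{H})$ commutator identity $[\mathrm{ad}\,\epsilon,( \mathrm{ad}\,\phi)^m]=-m(m-1)(\mathrm{ad}\,\phi)^{m-1}+m(\mathrm{ad}\,\phi)^{m-1}\mathrm{ad}\,\alpha^v$ together with $\mathrm{ad}\,\alpha^v(\phi')=-A\phi'$, so the scalar vanishes exactly at exponent $1-A$ (the apparent transposition of $i$ and $j$ between your cases and the paper's is only the paper's own inconsistency between (159) and its proof). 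Your reading of the second identity as $\mathrm{ad}\,\phi_k(x_{ij})=0$ is also the correct repair of the statement's typo, and is what the paper implicitly intends when it says the second claim "goes along similar lines."
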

\begin{proof}
    We prove only the first statement of the lemma since the proof of the second one goes along similar lines. We distinguish two cases namely \(k=i\) and \(k\ne i\). For \(k=i\) we prove that
    \begin{equation}
        \text{ad } \epsilon_i(\text{ad }\phi_i)^{(1-A_{ij})}(\phi_j)=0\text{  } (i\ne j) 
    \end{equation}
    In order to show this we can use the commutation relations that apply within the quaternion Lie algebra representation \(\mathfrak{sl}(2, \mathbb{H})\). Since \(\{ e_i, J e_i, f_i, J f_i, \alpha_i^v, J \alpha_i^v \}\) spans a subalgebra of \(\mathfrak{sl}(2, \mathbb{H})\) within \(\tilde{g}(A)\), and because \(\tilde{g}(A)\) is a module of \(\mathfrak{sl}(2, \mathbb{H})\), the linear operators \(\epsilon_i\) and \(\phi_i\) satisfy the commutation relation:
    \begin{equation}
        [\epsilon_i, (\phi_i)^m]=-m(m-1)(\phi_i)^{m-1}+m(\phi_i)^{m-1}\alpha_i^v
    \end{equation}
    (This can be proven using mathematical induction and Jacobi's identity.) Therefore, the adjoint operators \(\text{ad} \, \epsilon_i\) and \(\text{ad} \, \phi_i\) satisfy the following commutation relation:
    \begin{equation}
        [\text{ad }\epsilon_i, (\text{ad }\phi_i)^m]=-m(m-1)(\text{ad }\phi_i)^{m-1}+m(\text{ad }\phi_i)^{m-1}\text{ad }\alpha_i^v
    \end{equation}
    Next, equation (167) will be used to show that equation (165) is satisfied. Since \(i \neq j\), it follows that \((\text{ad} \, \epsilon_i)(\phi_j) = 0\), which is obtained after substituting \(m := (1 - A_{ij})\).
    \begin{equation}
        \text{ad }\epsilon_i(\text{ad }\phi_i)^m(\phi_j)=\{-m(m-1)(\text{ad }\phi_i)^{m-1}+m(\text{ad }\phi_i)^{m-1}\text{ad }\alpha_i^v\}(\phi_j)
    \end{equation}
    To proceed, use equations (127) and (128) in the form:
    \begin{equation}
        \text{ad }\alpha_i^v(\phi_j)=-A_{ij}\phi_j=(m-1)\phi_j
    \end{equation}
    Substitute equation (169) into equation (168) to obtain:
    \begin{equation}
        \text{ad }\epsilon_i(\text{ad }\phi_i)^m(\phi_j)=\{-m(m-1)+m(m-1)\}(\text{ad }\phi_i)^{m-1}(\phi_j)=0
    \end{equation}
    This completes the case \( k = i \). For \( k \neq i \), \( k \neq j \), the elements \(\epsilon_k\), \(\phi_i\), and \(\phi_j\) are commutative. Therefore:
    \begin{equation}\begin{split}
        \text{ad }\epsilon_k(y_{ij})&=(\text{ad }\phi_i)^{1-A_{ij}}\text{ad }\epsilon_k(\phi_j)\\&=(\text{ad }\phi_i)^{1-A_{ij}}[\epsilon_k,\phi_j]\\&=0
        \end{split}
    \end{equation}
    Finally, consider the case \( k \neq i \), \( k = j \). Using the commutation relations between the elements \(\epsilon_j\), \(\phi_i\), and \(\alpha_j^v\), we obtain:
    \begin{equation}
        \begin{split}
        \text{ad} \, \epsilon_j (y_{ij}) &= (\text{ad} \, \phi_i)^{1-A_{ij}} \text{ad} \, \epsilon_j (\phi_j) \\&= (\text{ad} \, \phi_i)^{1-A_{ij}} (\alpha_j^v) \\&= (\text{ad} \, \phi_i)^{1-A_{ij}} [\phi_j, \alpha_j^v] \\&= A_{ji} (\text{ad} \, \phi_i)^{-A_{ij}} (\phi_i)
        \end{split}
    \end{equation}
    For \( A_{ij} \neq 0 \) (i.e., \( i \neq j \)), it follows, according to the definition of the generalized Cartan matrix, that \( A_{ij} \leq 0 \). Therefore, the right-hand side of (172) is a multiple of the commutator of \(\phi_i\), which results in zero. If \( A_{ij} = 0 \), then the transposed element \( A_{ji} = 0 \), and the left-hand side of (172) again results in zero. This completes the proof of Lemma 3.1.
\end{proof}
Now it will be proven that \( I^+ \) is an ideal in \( \tilde{g}(A) \). For this, consider any element \( u \in \tilde{g}(A) \). According to part (c) of Theorem 2.2, this element has the unique decomposition:
\begin{equation}
    u = x + h + J h + y
\end{equation}
where
\begin{equation}
    x \in \tilde{N}^+, \quad h \in H, \quad J h \in J H, \quad y \in \tilde{N}^-
\end{equation}
We need to show that for every \( u \in \tilde{g}_\alpha \),
\begin{equation}
    [u, I^+] = [x, I^+] + [h, I^+] + [J h, I^+] + [y, I^+] \subset I^+
\end{equation}
The first term \([x, I^+]\) is straightforward. This term lies in \( I^+ \) because \( I^+ \) is an ideal in \( \tilde{N}^+ \). For the second term, Jacobi's identity is required. Elements of \( I^+ \) are of the form \([z, x_{ij}]\) with \( z \in \tilde{N}^+ \), and linear combinations of such objects are within \( I^+ \), since \( I^+ \) is an ideal generated by \( S^+ \). Applying Jacobi's identity, we get:
\begin{equation}
    [h, [z, x_{ij}]] = [z, [h, x_{ij}]] + [[h, z], x_{ij}]
\end{equation}
\begin{equation}
    [J h, [z, x_{ij}]] = [z, [J h, x_{ij}]] + [[J h, z], x_{ij}]
\end{equation}
As \( h \) and \( J h \) act diagonally on \( \tilde{g}(A) \), we obtain:
\begin{equation}
    [h, x_{ij}] = \lambda x_{ij}, \quad [h, z] = \mu z \quad (z \in \tilde{g}_\mu)
\end{equation}
\begin{equation}
    [J h, x_{ij}] = J \lambda x_{ij}, \quad [J h, z] = J \mu z \quad (z \in \tilde{g}_{J \mu})
\end{equation}
It follows that both terms are in \( I^+ \). 

The term \([y, I^+]\) has the following form (using Jacobi's identity):
\begin{equation}
    [y, [z, x_{ij}]] = [z, [y, x_{ij}]] + [x_{ij}, [z, y]]
\end{equation}
A simple example of elements satisfying this is obtained by taking \( z \) and \( y \) as generator elements, say \( y = \phi_k \) and \( z = \epsilon_m \). This leads to:
\begin{equation}
    [\phi_k, [\epsilon_m, x_{ij}]] = [\epsilon_m, [\phi_k, x_{ij}]] + [x_{ij}, [\epsilon_m, \phi_k]]
\end{equation}
The first term on the right-hand side is zero according to Lemma 3.1. Since \([\epsilon_m, \phi_k] \in H\) or \([\epsilon_m, \phi_k] \in JH\), the second term on the right-hand side is of the form \([x_{ij}, h] = k x_{ij}\) or \([x_{ij}, J h] = J k x_{ij}\). Therefore, \( I^+ \) and similarly \( I^- \) are ideals in \( \tilde{g}(A) \). The final step is to consider the ideal
\begin{equation}
    K := I^+ + I^-
\end{equation}
It has the property that the intersections with \( H \) and \( JH \) are trivial:
\begin{equation}
    K \cap H = 0 \quad \text{and} \quad K \cap JH = 0
\end{equation}
Now the necessary properties have been obtained to define the standard quaternion Kac-Moody Lie algebra.
\begin{definition}
    Let \(\Tilde{g}(A)\) be the Universal Kac-Moody Quaternion Lie algebra of Definition 2.10 and \(K\) the ideal defined in (182). Then the quotient algebra
    \begin{equation}\label{3}
        g(A)=\Tilde{g}(A)/K
    \end{equation} 
    is by definition the \textbf{Standard Kac-Moody Quaternion Lie Algebra} corresponding to the generalized Cartan matrix \(A\).
\end{definition}
\begin{theorem}
    Let \(g(A)\) be the Standard Kac-Moody Quaternion Lie algebra belonging to the generalized Cartan matrix \(A=(A_{ij})\). Then \(g(A)\) has the following properties:
    \begin{enumerate}
        \item Let \(N^-\) be the \(\mathbb{C}\)-module generated by \(\{f_1,\dots,f_n,Jf_1,\dots,Jf_n\}\) and \(N^+\) be the \(\mathbb{C}\)-module generated by \(\{e_1,\dots,e_n,Je_1,\dots,Je_n\}\), and Let \(K\) generated by \(\textbf{H}\otimes_{\textbf{C}} H=H\oplus JH\). Then \(N^-,K, \text{and } N^+\) viewed as a real Lie subalgebra of \(g(A)\) give the decomposition of \(g(A):\) 
        \begin{equation}
            g(A)=N^-\oplus K\oplus N^+
        \end{equation}
        \item Considering \(g(A)\) as an ad \(H_r\) module, where \(H_r\) is a maximal commutative subalgebra of \(g(A)\) we have the root decomposition 
        \begin{equation}
           g(A):=\left(\bigoplus_{\alpha\ne0, \alpha\in Q^+}g_{-\alpha}\right)\oplus g_0\oplus \left(\bigoplus_{\alpha\ne0, \alpha\in Q^+}g_{+\alpha}\right) 
        \end{equation}
        where \(g_\alpha=\{x\in g(A)|\forall h\in H:\text{ad } h(x)=\langle \alpha,h\rangle x\}, H_r\subset g_0=K\). Furthermore, dim \(g_\alpha<\infty\) and \(g_\alpha\subset N^\pm\) for \(\pm \alpha \in Q^+, \alpha\ne0\)
        \item In the Standard Kac-Moody Quaternion Lie algebra \(g(A)\) the following commutation relations hold
        \begin{equation}
            [h,e_i]=\langle \alpha_i,h\rangle e_i, [Jh,e_i]=\langle \alpha_i,Jh\rangle e_i, [h,Je_i]=\langle \alpha_i,Jh\rangle e_i,
        \end{equation}
        \begin{equation}
            [Jh,Je_i]=-\langle \alpha_i,h\rangle e_i, [h,f_i]=-\langle \alpha_i,f_i\rangle,  [Jh,f_i]=-\langle \alpha_i,Jh\rangle f_i,
        \end{equation}
        \begin{equation}
            [h,Jf_i]=-\langle \alpha_i,h\rangle Jf_i, [Jh,Jf_i]=\langle \alpha_i,h\rangle f_i,
        \end{equation}
        \begin{equation}
            [e_i,f_j]=\delta_{ij} \alpha_i^v,[Je_i,f_j]=\delta_{ij} J\alpha_i^v,[e_i,Jf_j]=\delta_{ij} J\alpha_i^v,
        \end{equation}
        \begin{equation}
           [Je_i,Jf_j]=\delta_{ij} J\alpha_i^v,[h,h]=0,[h,Jh]=0,[Jh,Jh]=0 
        \end{equation}
        \item Furthermore, \(g(A)\) one has
        \begin{equation}
            (\text{ad} \epsilon_j)^{1-A_{ij}}(\epsilon_i)=0 \text{ } \{i\ne j;i,j=1,\dots,n\}
        \end{equation}
        \begin{equation}
            (\text{ad} \epsilon_j)^{1-A_{ij}}(\phi_i)=0 \text{ } \{i\ne j;i,j=1,\dots,n\}
        \end{equation}
        where \(\phi_i=e_i \text{ or } Je_i\) and \(\phi_i=f_i \text{ or } Jf_i\).
    \end{enumerate}
\end{theorem}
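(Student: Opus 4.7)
The plan is to derive each of (a)--(d) by pushing the corresponding statement for the Universal Kac-Moody Quaternion Lie algebra $\tilde{g}(A)$ through the canonical projection $\pi:\tilde{g}(A)\to g(A)=\tilde{g}(A)/K$, where $K=I^{+}+I^{-}$ is the ideal constructed in (182). Parts (c) and (d) follow essentially by construction, so the heart of the argument is the triangular decomposition in (a), from which (b) is then deduced.

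For part (a), I would begin from Theorem 2.2(c), which gives $\tilde{g}(A)=\tilde{N}^{-}\oplus(H\oplus JH)\oplus\tilde{N}^{+}$. Because the ideal $I^{+}$ is generated by $S^{+}\subset\tilde{N}^{+}$ and $I^{-}$ by $S^{-}\subset\tilde{N}^{-}$, one has $I^{\pm}\subset\tilde{N}^{\pm}$; setting $N^{\pm}:=\tilde{N}^{\pm}/I^{\pm}$, the projection $\pi$ restricts to surjections $\tilde{N}^{\pm}\twoheadrightarrow N^{\pm}$ and to a linear map $H\oplus JH\to g(A)$. The crucial input is the property $K\cap H=0$ and $K\cap JH=0$ recorded in (183), which guarantees that the Cartan part injects into $g(A)$. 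Combining surjectivity of $\pi$ on the three summands with the injectivity of its restriction to $H\oplus JH$ yields the desired direct-sum decomposition $g(A)=N^{-}\oplus(H\oplus JH)\oplus N^{+}$.

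For part (b), I would verify that the ideals $I^{\pm}$ are homogeneous with respect to the root grading of $\tilde{g}(A)$ established in Theorem 2.2(d). The generators $x_{ij}=(\mathrm{ad}\,\epsilon_{j})^{1-A_{ji}}(\epsilon_{i})$ of $S^{+}$ are themselves root vectors of weight $\alpha_{i}+(1-A_{ji})\alpha_{j}\in Q^{+}$, and an ideal generated by root vectors decomposes as a sum of its weight components; similarly $I^{-}$ lives in weights from $-Q^{+}$. Consequently $g_{\alpha}=\tilde{g}_{\alpha}/(\tilde{g}_{\alpha}\cap K)$ remains a simultaneous eigenspace for $\mathrm{ad}\,H_{r}$ of eigenvalue $\langle\alpha,h\rangle$, with $\dim g_{\alpha}\leq\dim\tilde{g}_{\alpha}<\infty$, and the inclusion $g_{\alpha}\subset N^{\pm}$ for $\pm\alpha\in Q^{+}\setminus\{0\}$ is immediate from (a). Parts (c) and (d) are then routine: the commutation relations (187)--(191) are just the images under $\pi$ of the relations (120)--(124) in $\tilde{g}(A)$, well-defined on the quotient because $K$ is a two-sided ideal; and (192)--(193) follow at once because by (158)--(159) and (182) the elements $(\mathrm{ad}\,\epsilon_{j})^{1-A_{ji}}(\epsilon_{i})$ and $(\mathrm{ad}\,\phi_{j})^{1-A_{ji}}(\phi_{i})$ are already among the generators of $K$, hence project to zero.

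The main obstacle I anticipate lies in part (a), specifically in the rigorous verification of (183), that is $K\cap(H\oplus JH)=0$. This is the linchpin of the whole structure theorem, because without it the Cartan subalgebra would collapse in the quotient and the root decomposition (b) would be meaningless. The natural strategy is a weight-separation argument exploiting the root grading: every non-zero homogeneous element of $I^{+}$ carries a weight in $Q^{+}\setminus\{0\}$, every non-zero homogeneous element of $I^{-}$ carries a weight in $-Q^{+}\setminus\{0\}$, and $H\oplus JH$ is the weight-zero component, so a sum from $I^{+}+I^{-}$ can meet the Cartan only in the zero vector. To make this airtight one needs Lemma 3.1, which certifies that $\mathrm{ad}\,\epsilon_{k}$ and $\mathrm{ad}\,\phi_{k}$ annihilate the generators of $S^{\mp}$, preventing any cross-cancellation between $I^{+}$ and $I^{-}$ that might produce a weight-zero element; this is where the real work of the proof should be concentrated.
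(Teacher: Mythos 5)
Your proposal is correct and follows essentially the same route as the paper: the paper's (very terse) proof likewise reduces everything to Theorem 2.2 via the canonical projection, relying on the previously established facts that \(I^{\pm}\) are ideals contained in \(\tilde{N}^{\pm}\) (via Lemma 3.1) and that \(K\cap H=0\) and \(K\cap JH=0\). Your write-up is considerably more detailed than the paper's two-paragraph argument, in particular in making explicit the weight-separation reasoning behind \(K\cap(H\oplus JH)=0\), but the underlying strategy is the same.
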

\renewcommand\qedsymbol{$\blacksquare$}
\begin{proof}
    To prove this theorem, it is sufficient to show that \(H, JH\), and \({e_i,Je_i,f_i, Jf_i}_{i=1}^n\) satisfy the canonical projection \(\Psi\) from \(\Tilde{g}(A)\) to \(g(A)=\Tilde{g}(A)/\varphi\). The subsequent theorem will be clear from Theorem 2.2.
    
    For \(H\) and \(JH\), this is straightforward because \(\varphi\cap H=0\) and \(\varphi\cap JH=0\). Therefore, \(H\) and \(JH\) are mapped one to one by \(\Psi\). The same reasoning applies to \({e_i,Je_i,f_i, Jf_i}_{i=1}^n\) as in the proof of Theorem 2.2.
\end{proof}
\begin{example}
    \begin{math}
        \mathfrak{sl}(n,\mathbb{H})
    \end{math}
    is the Standard Kac-Moody Quaternion Lie algebra.
\end{example}
In the following subsection, the construction of reduced quaternion Kac-Moody Lie algebras will be discussed.
\section{Construction of Reduced Kac-Moody Quaternion Lie Algebra}
\label{sec4}
In this section, we will discuss the construction of reduced quaternionic Kac-Moody Lie algebras. Based on the discussion in section 2.1, a universal Kac-Moody quaternion Lie algebra has been constructed. This construction will then be used to construct the reduced quaternionic Kac-Moody Lie algebra by creating a quaternionic Lie algebra quotient. Specifically, let \( \mathfrak{g}(A) \) be the universal Kac-Moody quaternion Lie algebra and let \( \mathfrak{v} \) be an ideal in \( \mathfrak{g}(A) \). The quotient Lie algebra \( \mathfrak{g}(A) / \mathfrak{v} \) represents the reduced quaternionic Kac-Moody Lie algebra. The main topic of this subsection is to introduce the definition of the reduced quaternionic Kac-Moody Lie algebra.

Before defining the reduced quaternionic Kac-Moody Lie algebra, we will first explain the concept of gradation and one lemma that will be used to prove the theorem resulting from the definition of the reduced quaternionic Kac-Moody Lie algebra.

Let \( M \) be an abelian group. The decomposition \( V = \bigoplus_{a \in M} V_a \) of the quaternionic module \( (\mathcal{V}, J) \) into a direct sum of its submodules is called the \( M \)-gradation of \( (\mathcal{V}, J) \). A submodule \( U \subset V \) is called graded if \( U = \bigoplus_{a \in M} (U \cap V_a) \). Elements of \( V_a \) are called homogeneous of degree \( \alpha \). The next step is to prove Lemma 4.1, which will be used later in the proof of Proposition 4.2.
\begin{lemma}\label{Lemma1}
    Let \(H_r\) be a commutative quaternion Lie algebra and \(V\) be a diagonalisable \(H_r\)-module, i.e.
    \begin{equation}
        V=\bigoplus_{\lambda\in H_r^*} V_\lambda \text{   dimana  } V_\lambda=\{v\in V|h(v)=\langle\lambda,h\rangle v, \forall h\in H_r\}
    \end{equation}
    Let \(U\) be a submodule of \(V\). Then
    \begin{equation}
        U=\bigoplus_{\lambda\in H_r^*} (U\cap V_\lambda)
    \end{equation}
\end{lemma}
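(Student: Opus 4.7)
The plan is to prove two inclusions. The direction $\bigoplus_{\lambda \in H_r^*}(U\cap V_\lambda)\subseteq U$ is immediate since each summand lies in $U$, and the sum is direct because it is a subsum of the direct sum decomposition of $V$. The content of the lemma is therefore the reverse inclusion $U\subseteq \bigoplus_{\lambda\in H_r^*}(U\cap V_\lambda)$, which says that every element of $U$ has all of its homogeneous components (with respect to the weight decomposition of $V$) lying inside $U$.

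To establish this, I would take an arbitrary $u\in U$ and, using the hypothesis that $V$ is diagonalisable, write uniquely $u=v_{\lambda_1}+v_{\lambda_2}+\cdots+v_{\lambda_k}$ with $v_{\lambda_i}\in V_{\lambda_i}$ and the weights $\lambda_1,\ldots,\lambda_k\in H_r^*$ pairwise distinct. I would then prove by induction on $k$ that each component $v_{\lambda_i}$ belongs to $U$. The base case $k=1$ is trivial because $u=v_{\lambda_1}\in U$. For the inductive step, since $\lambda_1\ne \lambda_k$, there exists $h\in H_r$ with $\langle \lambda_1,h\rangle\ne \langle \lambda_k,h\rangle$. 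Consider
\begin{equation}
u' \;=\; h(u)-\langle \lambda_k,h\rangle\,u \;=\; \sum_{i=1}^{k-1}\bigl(\langle \lambda_i,h\rangle-\langle \lambda_k,h\rangle\bigr)\,v_{\lambda_i}.
\end{equation}
Because $U$ is an $H_r$-submodule, $u'\in U$, and $u'$ is a weight-sum involving only the distinct weights $\lambda_1,\ldots,\lambda_{k-1}$, with the coefficient of $v_{\lambda_1}$ nonzero. By the inductive hypothesis applied to $u'$, each term $(\langle \lambda_i,h\rangle-\langle \lambda_k,h\rangle)v_{\lambda_i}$ lies in $U$ for $i=1,\ldots,k-1$, so in particular $v_{\lambda_1}\in U$. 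Iterating the argument (or applying the same reasoning with a different choice of $h$ to isolate each weight in turn) shows that every $v_{\lambda_i}$ lies in $U$, which is what was needed.

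The main technical point — and the step I expect to require the most care — is justifying that one can always separate a chosen weight from the others by a single element of $H_r$. This is a direct consequence of the distinctness of the $\lambda_i$ in $H_r^*$: for each pair $\lambda_i\ne \lambda_j$ the linear functional $\lambda_i-\lambda_j$ is nonzero on $H_r$, so an $h$ separating them exists. An equivalent and slightly cleaner formulation would replace the induction by a Vandermonde argument: choose a single $h\in H_r$ for which the scalars $\langle \lambda_1,h\rangle,\ldots,\langle \lambda_k,h\rangle$ are pairwise distinct, then the vectors $u,h(u),h^2(u),\ldots,h^{k-1}(u)$ all lie in $U$, and the invertibility of the associated Vandermonde matrix exhibits each $v_{\lambda_i}$ as an $H_r$-linear combination of these, hence as an element of $U$. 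Either route yields $U=\bigoplus_{\lambda\in H_r^*}(U\cap V_\lambda)$ as required.
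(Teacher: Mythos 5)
Your proposal is correct, and your second formulation (choosing a single $h$ separating all the weights and inverting the Vandermonde system $h^k(u)=\sum_i\langle\lambda_i,h\rangle^k v_{\lambda_i}$) is precisely the argument the paper gives; the inductive variant you lead with is just an unrolled version of the same idea. Note that both you and the paper implicitly use that the ground field is infinite to find an $h$ avoiding the finitely many hyperplanes $\ker(\lambda_i-\lambda_j)$, which is harmless here.
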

\begin{proof}
    Let \( v \in U \) be decomposed as \( v = \sum_{i=1}^m v_{\lambda_i} \) where \( v_{\lambda_i} \in V_{\lambda_i} \). We will then prove that \( v_{\lambda_i} \in U \) for each \( i \in \{1, \ldots, m\} \). Suppose \( h \in H_r \) such that each \( \langle \lambda_i, h \rangle \) is distinct. Thus, we have \(h^k(v) = \sum_{i=1}^m \langle \lambda_i, h \rangle^k v_{\lambda_i} \in U\) for \( k \in \{0, \ldots, m - 1\} \) because \( U \) is an \( H_r \)-module. This is a system of linear equations with a non-degenerate matrix. Hence, \( v_{\lambda_i} \in U \) for each \( i \in \{1, \ldots, m\} \).
\end{proof}
In addition, one more proposition is needed, which will be used to define the reduced quaternionic Kac-Moody Lie algebra.
\begin{prop}
    Let \(\Tilde{g}(A)\) be the Universal Kac-Moody Quaternion Lie algebra. Then, among the ideals of \(\Tilde{g}(A)\) intersecting \(H\) and \(JH\) trivially, there exist a unique maximal ideal \(\vartheta\). Futhermore, 
    \begin{equation}
        \vartheta=(\vartheta\cap\Tilde{N}^-)\oplus(\vartheta\cap\Tilde{N}^+)
    \end{equation}
\end{prop}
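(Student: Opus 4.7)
The plan is to define $\vartheta$ as the sum of all ideals of $\Tilde{g}(A)$ whose intersection with both $H$ and $JH$ is trivial, and then to use Lemma 4.1 together with the root decomposition from Theorem 2.2(d) to verify that $\vartheta$ itself has trivial intersection with $H$ and $JH$ and to obtain the claimed direct sum decomposition.

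First I would let $\{I_\beta\}_{\beta\in B}$ denote the family of all ideals of $\Tilde{g}(A)$ satisfying $I_\beta\cap H=0$ and $I_\beta\cap JH=0$, and set $\vartheta:=\sum_{\beta\in B}I_\beta$. As a sum of ideals, $\vartheta$ is itself an ideal, and uniqueness of a maximal such ideal is automatic once we verify $\vartheta\cap H=\vartheta\cap JH=0$, since then any ideal with trivial intersection is contained in $\vartheta$ by construction.

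Since $H_r\subset\Tilde{g}(A)$ is a subalgebra, every $I_\beta$ is automatically stable under $\operatorname{ad}H_r$. Applying Lemma 4.1 to $I_\beta$ as an $H_r$-submodule of $\Tilde{g}(A)$ yields
\begin{equation*}
    I_\beta=\bigoplus_{\mu\in H_r^*}(I_\beta\cap\Tilde{g}_\mu).
\end{equation*}
Because $\Tilde{g}_0=K=H\oplus JH$ and each quaternion ideal $I_\beta$ is $\sigma$-invariant (with $\sigma=+1$ on $H$ and $\sigma=-1$ on $JH$), any $x=h+Jh'\in I_\beta\cap K$ satisfies $h=\tfrac12(x+\sigma x)\in I_\beta\cap H=0$ and $Jh'=\tfrac12(x-\sigma x)\in I_\beta\cap JH=0$, whence $I_\beta\cap K=0$. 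Combining this with Theorem 2.2(c) forces $I_\beta\subset\Tilde{N}^-\oplus\Tilde{N}^+$, and summing over $\beta$ gives $\vartheta\subset\Tilde{N}^-\oplus\Tilde{N}^+$, so in particular $\vartheta\cap H=\vartheta\cap JH=0$, establishing existence and uniqueness of $\vartheta$.

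For the decomposition I would apply Lemma 4.1 a second time, now to $\vartheta$ itself. Since $\vartheta\cap\Tilde{g}_0=\vartheta\cap K=0$, the weight-space expansion collapses onto nonzero weights, and separating positive from negative roots gives
\begin{equation*}
    \vartheta=\bigoplus_{\mu\neq 0}(\vartheta\cap\Tilde{g}_\mu)=(\vartheta\cap\Tilde{N}^-)\oplus(\vartheta\cap\Tilde{N}^+),
\end{equation*}
where the last identification uses $\bigoplus_{\alpha\in Q^+,\,\alpha\neq 0}\Tilde{g}_{\pm\alpha}=\Tilde{N}^{\pm}$ from Theorem 2.2(d). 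The step I expect to be the principal obstacle is the verification that $I_\beta\cap K=0$, rather than merely $I_\beta\cap H=I_\beta\cap JH=0$; this is the specifically quaternionic ingredient and relies on carefully exploiting $\sigma$-invariance of quaternion ideals in the sense of Definition 2.6, whereas the remainder is weight-space bookkeeping transplanted from the classical Kac--Moody construction.
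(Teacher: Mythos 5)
Your argument is correct and agrees with the paper on the existence/uniqueness half: both you and the authors define \(\vartheta\) as the sum of all ideals meeting \(H\) and \(JH\) trivially and invoke Lemma 4.1 to see that every such ideal is graded, so that the sum again meets \(H\) and \(JH\) trivially. Where you diverge is the decomposition \(\vartheta=(\vartheta\cap\Tilde{N}^-)\oplus(\vartheta\cap\Tilde{N}^+)\): the paper obtains it by explicitly computing \([x,\vartheta\cap\Tilde{N}^+]\) for each generator \(x\in\{e_i,Je_i,h_i,Jh_i,f_i,Jf_i\}\) to show that \(\vartheta\cap\Tilde{N}^{\pm}\) are themselves ideals of \(\Tilde{g}(A)\), whereas you apply Lemma 4.1 a second time to \(\vartheta\) and let the weight-space expansion collapse onto the nonzero roots once \(\vartheta\cap\Tilde{g}_0=0\) is known. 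Your route is the cleaner one and, notably, it plugs a gap the paper leaves open: the paper only ever checks \(\vartheta\cap H=\vartheta\cap JH=0\), yet the decomposition requires \(\vartheta\cap K=0\) with \(K=H\oplus JH\), and an element \(h+Jh'\) could a priori lie in \(\vartheta\) without either summand doing so; your use of \(\sigma\)-invariance of quaternion ideals (\(\sigma=+1\) on \(H\), \(\sigma=-1\) on \(JH\)) to split such an element is exactly the missing step, and it is consistent with the paper's Definition 2.6. What the paper's generator-by-generator computation buys in exchange is the stronger and independently useful statement that \(\vartheta\cap\Tilde{N}^{+}\) and \(\vartheta\cap\Tilde{N}^{-}\) are ideals of the whole algebra \(\Tilde{g}(A)\), which your bookkeeping does not directly yield.
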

\begin{proof}
    Recall from Lemma \ref{Lemma1} that every ideal \(i\) of \(\Tilde{g}(A)\) is graded (for the \(Q\)-gradation, i.e.
    \begin{equation}
        i=\bigoplus_{\alpha\in Q} ( i\cap \Tilde{g}_\alpha)
    \end{equation}
    in particular, a sum of ideals of \(\Tilde{g}(A)\) intersecting \(H\) and \(JH\) trivially also intersect \(H\) and \(JH\) trivially, and we let \(\vartheta\) be the unique maximal such ideal. Note that
    \begin{equation}
        [e_i,\Tilde{N}^+]\subset \Tilde{N}^+ \text{ maka } [e_i, \vartheta\cap\Tilde{N}^+]\subset \vartheta\cap\Tilde{N}^+
    \end{equation}
    \begin{equation}
        [Je_i,\Tilde{N}^+]\subset \Tilde{N}^+ \text{ maka } [Je_i, \vartheta\cap\Tilde{N}^+]\subset \vartheta\cap\Tilde{N}^+
    \end{equation}
    \begin{equation}
        [h_i,\Tilde{N}^+]\subset \Tilde{N}^+ \text{ maka } [h_i, \vartheta\cap\Tilde{N}^+]\subset \vartheta\cap\Tilde{N}^+
    \end{equation}
    \begin{equation}
        [Jh_i,\Tilde{N}^+]\subset \Tilde{N}^+ \text{ maka } [Jh_i, \vartheta\cap\Tilde{N}^+]\subset \vartheta\cap\Tilde{N}^+
    \end{equation}
    \begin{equation}
        [f_i,\Tilde{N}^+]\subset \Tilde{N}^+\oplus h\oplus Jh \text{ maka } [f_i, \vartheta\cap\Tilde{N}^+]\subset \vartheta\cap(\Tilde{N}^+\oplus h\oplus Jh)\subset \vartheta\cap\Tilde{N}^+
    \end{equation}
    \begin{equation}
        [Jf_i,\Tilde{N}^+]\subset \Tilde{N}^+\oplus h\oplus Jh \text{ maka } [Jf_i, \vartheta\cap\Tilde{N}^+]\subset \vartheta\cap(\Tilde{N}^+\oplus h\oplus Jh)\subset \vartheta\cap\Tilde{N}^+
    \end{equation}
    and hence \([\Tilde{g}(A), \vartheta\cap\Tilde{N}^+]\subset \vartheta\cap\Tilde{N}^+\), that is, \(\vartheta\cap\Tilde{N}^+\) is an ideal in \(\Tilde{g}(A)\) (and similarly for \(\vartheta\cap\Tilde{N}^-\). This shows that \(\vartheta=(\vartheta\cap\Tilde{N}^+)\oplus(\vartheta\cap\Tilde{N}^-)\).
\end{proof}
Now, the necessary properties have been obtained to define the reduced quaternionic Kac-Moody Lie algebra.
\begin{definition}
    Let \(\Tilde{g}(A)\) be the Universal Kac-Moody Quaternion Lie algebra of Definition 2.10 and \(\vartheta\) the ideal defined in (196). Then the quotient algebra
    \begin{equation}\label{4}
        \Breve{g}(A)=\Tilde{g}(A)/\vartheta
    \end{equation} 
    is by definition the \textbf{Reduced Kac-Moody Quaternion Lie Algebra} corresponding to the generalized Cartan matrix \(A\).
\end{definition}
\begin{theorem}
    Let \(\Breve{g}(A)\) be the Reduced Kac-Moody Quaternion Lie algebra belonging to the generalized Cartan matrix \(A=(A_{ij})\). Then \(\Breve{g}(A)\) has the following properties:
    \begin{enumerate}
        \item Let \(\Breve{N}^-\) be the \(\mathbb{C}\)-module generated by \(\{f_1,\dots,f_n,Jf_1,\dots,Jf_n\}\) and \(\Breve{N}^+\) be the \(\mathbb{C}\)-module generated by \(\{e_1,\dots,e_n,Je_1,\dots,Je_n\}\), and Let \(K\) generated by \(\textbf{H}\otimes_{\textbf{C}} H=H\oplus JH\). Then \(\Breve{N}^-,K, \text{and } \Breve{N}^+\) viewed as a real Lie subalgebra of \(\Breve{g}(A)\) give the decomposition of \(\Breve{g}(A):\) 
        \begin{equation}
            \Breve{g}(A)=\Breve{N}^-\oplus K\oplus \Breve{N}^+
        \end{equation}
        \item Considering \(\Breve{g}(A)\) as an ad \(H_r\) module, where \(H_r\) is a maximal commutative subalgebra of \(\Breve{g}(A)\) we have the root decomposition 
        \begin{equation}
            \Breve{g}(A):=\left(\bigoplus_{\alpha\ne0, \alpha\in Q^+}\Breve{g}_{-\alpha}\right)\oplus \Breve{g}_0\oplus \left(\bigoplus_{\alpha\ne0, \alpha\in Q^+}\Breve{g}_{+\alpha}\right)
        \end{equation}
        where \(\Breve{g}_\alpha=\{x\in \Breve{g}(A)|\forall h\in H:\text{ad } h(x)=\langle \alpha,h\rangle x\}, H_r\subset \Breve{g}_0=K\). Furthermore, dim \(\Breve{g}_\alpha<\infty\) and \(\Breve{g}_\alpha\subset N^\pm\) for \(\pm \alpha \in Q^+, \alpha\ne0\)
        \item In the Rudeced Kac-Moody Quaternion Lie algebra \(\Breve{g}(A)\) the following commutation relations hold
        \begin{equation}
            [h,e_i]=\langle \alpha_i,h\rangle e_i, [Jh,e_i]=\langle \alpha_i,Jh\rangle e_i, [h,Je_i]=\langle \alpha_i,Jh\rangle e_i,
        \end{equation}
        \begin{equation}
            [Jh,Je_i]=-\langle \alpha_i,h\rangle e_i, [h,f_i]=-\langle \alpha_i,f_i\rangle,  [Jh,f_i]=-\langle \alpha_i,Jh\rangle f_i,
        \end{equation}
        \begin{equation}
            [h,Jf_i]=-\langle \alpha_i,h\rangle Jf_i, [Jh,Jf_i]=\langle \alpha_i,h\rangle f_i,
        \end{equation}
        \begin{equation}
            [e_i,f_j]=\delta_{ij} \alpha_i^v,[Je_i,f_j]=\delta_{ij} J\alpha_i^v,[e_i,Jf_j]=\delta_{ij} J\alpha_i^v,
        \end{equation}
        \begin{equation}
          [Je_i,Jf_j]=\delta_{ij} J\alpha_i^v,[h,h]=0,[h,Jh]=0,[Jh,Jh]=0 
        \end{equation}
        \item \(\Breve{g}(A)\) has no nozero ideal \(\vartheta\) such that \(\vartheta\cap H=\textbf{0}\) and \(\vartheta\cap JH=\textbf{0}\)
    \end{enumerate}
\end{theorem}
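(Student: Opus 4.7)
The plan is to mirror the strategy used for Theorem 3.1 (the Standard case), leveraging two key inputs: Proposition 4.2, which gives $\vartheta \cap H = 0$, $\vartheta \cap JH = 0$, and the splitting $\vartheta = (\vartheta \cap \Tilde{N}^-) \oplus (\vartheta \cap \Tilde{N}^+)$; and Theorem 2.2, which provides the corresponding structural results upstairs in $\Tilde{g}(A)$.

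First I would verify that the canonical projection $\Psi : \Tilde{g}(A) \to \Breve{g}(A)$ embeds $H$ and $JH$, and that each generator $e_i, Je_i, f_i, Jf_i$ has nonzero image. Injectivity on $H$ and $JH$ is immediate from Proposition 4.2. To see $e_i \notin \vartheta$, note that if it were, then $[e_i, f_i] = \alpha_i^v$ would lie in $\vartheta \cap H = 0$, contradicting the independence of $\Pi^V$; analogous bracket calculations using the relations of Theorem 2.2(b) handle $f_i, Je_i, Jf_i$. With this in place, part (1) follows formally: since $\vartheta \subset \Tilde{N}^- \oplus \Tilde{N}^+$ and meets $K$ trivially, the quotient splits cleanly as $\Breve{N}^- \oplus K \oplus \Breve{N}^+$ with $\Breve{N}^{\pm} = \Tilde{N}^{\pm}/(\vartheta \cap \Tilde{N}^{\pm})$, and the commutation relations in (3) transfer verbatim since $\Psi$ is a homomorphism of quaternion Lie algebras.

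For part (2), I would invoke Lemma 4.1 to conclude that $\vartheta$ is $Q$-graded. Consequently each $\Tilde{g}_\alpha$ surjects onto $\Breve{g}_\alpha$ under $\Psi$, the root space decomposition descends directly, and $\dim \Breve{g}_\alpha \leq \dim \Tilde{g}_\alpha < \infty$ by Theorem 2.2(d). The containment $H_r \subset \Breve{g}_0 = K$ is preserved because $H_r$ injects. Part (4) is the distinctive property of the Reduced algebra and should be handled by the ideal correspondence. Suppose, for contradiction, that $\bar{\vartheta} \subset \Breve{g}(A)$ is a nonzero ideal with $\bar{\vartheta} \cap H = 0$ and $\bar{\vartheta} \cap JH = 0$. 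Set $\vartheta' := \Psi^{-1}(\bar{\vartheta})$, an ideal of $\Tilde{g}(A)$ strictly containing $\vartheta$. For any $h \in \vartheta' \cap H$, the image $\Psi(h) \in \bar{\vartheta} \cap \Psi(H) = 0$ by hypothesis, and injectivity of $\Psi|_H$ forces $h = 0$; similarly $\vartheta' \cap JH = 0$. This contradicts the maximality of $\vartheta$ asserted in Proposition 4.2.

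The main obstacle I anticipate is not in any single calculation but in part (4): one must thread the ideal correspondence through the injectivity of $\Psi|_H$ and $\Psi|_{JH}$ carefully, in particular arguing that $\bar{\vartheta} \cap H = 0$ in $\Breve{g}(A)$ pulls back to $\vartheta' \cap H = 0$ in $\Tilde{g}(A)$ without any leakage. Parts (1)--(3) are essentially bookkeeping: pulling back the corresponding statements from Theorem 2.2 through the quotient provided by Proposition 4.2.
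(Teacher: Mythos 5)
Your proposal is correct and follows the same overall strategy as the paper: establish that $H$, $JH$, and the generators $e_i, Je_i, f_i, Jf_i$ survive the canonical projection $\Psi:\Tilde{g}(A)\to\Breve{g}(A)$, then transfer parts (1)--(3) from Theorem 2.2 through the quotient, using Proposition 4.2 for the splitting $\vartheta=(\vartheta\cap\Tilde{N}^-)\oplus(\vartheta\cap\Tilde{N}^+)$ and the trivial intersections with $H$ and $JH$. Where you go beyond the paper is in two places, both to your credit. First, the paper asserts that the generators are not killed by $\Psi$ ``as in the proof of Theorem 2.2'' without adapting that argument to the ideal $\vartheta$; your bracket argument --- if $e_i\in\vartheta$ then $\alpha_i^v=[e_i,f_i]\in\vartheta\cap H=0$, a contradiction --- is a cleaner and self-contained justification. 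Second, and more importantly, the paper's proof never addresses part (4) at all, which is the one property that distinguishes the Reduced algebra; your ideal-correspondence argument (pull a nonzero ideal $\bar{\vartheta}\subset\Breve{g}(A)$ meeting $H$ and $JH$ trivially back to $\Psi^{-1}(\bar{\vartheta})\supsetneq\vartheta$, check via injectivity of $\Psi|_H$ and $\Psi|_{JH}$ that it still meets $H$ and $JH$ trivially, and contradict maximality) is exactly the missing step and is carried out correctly. So the proposal is not a different route so much as a completed version of the paper's route.
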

\renewcommand\qedsymbol{$\blacksquare$}
\begin{proof}
    To prove this theorem, it is sufficient to show that \(H, JH\), and \({e_i,Je_i,f_i, Jf_i}_{i=1}^n\) satisfy the canonical projection \(\Psi\) from \(\Tilde{g}(A)\) to \(\Breve{g}(A)=\Tilde{g}(A)/\varphi\). The subsequent theorem will be clear from Theorem 2.2.
    
    For \(H\) and \(JH\), this is straightforward because \(\varphi\cap H=0\) and \(\varphi\cap JH=0\). Therefore, \(H\) and \(JH\) are mapped one to one by \(\Psi\). The same reasoning applies to \({e_i,Je_i,f_i, Jf_i}_{i=1}^n\) as in the proof of Theorem 2.2.
\end{proof}
\begin{example}
    \begin{math}
        \mathfrak{sl}(n,\mathbb{H})
    \end{math}
    is the Reduced Kac-Moody Quaternion Lie algebra.
\end{example}


\begin{thebibliography}{1}
    \bibitem{ref1}
    Adam, J. F. (1969).\textit{Lectures on Lie Groups}. New York: W. A. Benjamin.
    \bibitem{ref2}
    Bauerle, G. G., and Kerf, E. A. (1990).
    \textit{Lie Algebras: Finite and Infinite Dimensional Lie Algebras and Applications in Physics, Part 1}. Amsterdam: Elsevier Science.
    \bibitem{ref3}
    Berman, S. (1981). Generators and Relations for Compact Lie algebras.\textit{Tsukuba Journal Math}, 133-141.
    \bibitem{ref4}
    Berman, S., and Pianzola, A. (2007).Generators and relations for Real forms of some kac-moody lie algebras. \textit{Communications in Algebra}, 935-959.
    \bibitem{ref5}
    Farnsteiner, R. (1984). Quaternionic Lie algebras. \textit{Linear Algebra and its Applications}, 225-231.
    \bibitem{ref6}
    Gabber, O., dan Kac, V. G. (1981). On Defining Relations of Certain Infinite-Dimensional Lie algebras. \textit{Bull. Amer. Math. Soc.,} 185-189.
    \bibitem{ref7}
    Hall, B. C. (2015). \textit{Lie Groups, Lie algebras, and Representations: An Elementary Introduction}. Switzerland: Springer.
    \bibitem{ref8}
    Humphreys, J. E. (2010). \textit{Introduction to Lie Algebras and Representation Theory}.Berlin: Springer.
    \bibitem{ref9}
    Joyce, D. (1998). Hypercomplex Algebraic Geometry. \textit{Oxford Quarterly Journal of Mathematics}, 129-162.
    \bibitem{ref10}
    Kac, V. G. (1968). Simple irreducible graded Lie algebras of finite growtj . \textit{Math. USSR-Investija 2}, 1271-1311.
    \bibitem{ref11}
    Kac, V. G. (1985). \textit{Inifinite dimensional Lie algebras (second edition)}. Cambridge: Cambridge University Press.
    \bibitem{ref12}
    Kempf, G. R. (1995). \textit{Algebraic Structure}. Braunschweig: Vieweg
    \bibitem{ref13}
    Kori, T. (2023). On a Quaternification of complex Lie algebras. \textit{arXiv: 2008.02913v3, 1-28}.
    \bibitem{ref14}
    Kori, T., and Imai, Y. (2015). Quaternifications and Extensions of Current Algebras on \(S^3\). \textit{Symmetry}, 2150-2180.
    \bibitem{ref15}
    Malik, D. S., Mordeson, J. N., and Sen, M. K. (1997). \textit{Fundamentals of Abstract Algebra}. New York: McGraw-Hill.
    \bibitem{ref16}
    Marquis, T. (2018). \textit{An Introduction to Kac-Moody Groups over Fields}. Zurich: European Mathematical Society.
    \bibitem{ref17}
    Moody, R. V. (1967). Lie algebras associated with generalized Cartan matrices. \textit{Bull. Amer. Math. Soc.,} 217-221.
    \bibitem{ref18}
    Roman, S. (2007). \textit{Advanced Linear Algebra}. Berlin: Springer.
    \bibitem{ref19}
    Voight, J. (2021). \textit{Quaternion Algebras}. Switzerland: Springer.
\end{thebibliography}
\end{document}